\newtheorem{theorem}{Theorem}[section]
\newtheorem{lemma}[theorem]{Lemma}
\newtheorem{claim}{Claim}
\newtheorem*{claim*}{Claim}
\newtheorem{theoremA}{Theorem}
\newtheorem{corA}[theoremA]{Corollary}
\newtheorem*{theorem*}{Theorem}
\newtheorem{proposition}[theorem]{Proposition}
\newtheorem{question}[theorem]{Question}
\theoremstyle{definition}
\newtheorem{definition}[theorem]{Definition}
\theoremstyle{remark}
\newtheorem{remark}[theorem]{Remark}
\numberwithin{equation}{section}
\numberwithin{figure}{section}
\def\NN{{\mathbb N}}
\def\QQ{{\mathbb Q}}
\def\RR{{\mathbb R}}
\newcommand{\T}{\mathbb{S}^1}
\def\ZZ{{\mathbb Z}}
\def\idid{{\mathrm{id}}}
\def\aff{{\mathrm{Aff}}}
\def\homeo{{\mathrm{Homeo}}}
\def\psl{{\mathrm{PSL}}}
\def\SO{{\mathrm{SO}}}
\def\fix{{\mathrm{Fix}}}
\def\stab{{\mathrm{Stab}}}
\def\rot{{\mathrm{rot}}}
\def\supp{{\mathrm{supp}}}
\renewcommand{\setminus}{\smallsetminus}
\def\kovacevic{{Kova\v{c}evi\'{c}}}
\title{Non-locally discrete actions on the circle with at most $N$ fixed points}
\author{Christian Bonatti \and Jo\~ao Carnevale \and Michele Triestino}
\date{\today}
\keywords{group actions on the circle, M\"obius group, maps with at most $N$ fixed points, non-locally discrete groups}
\begin{document}

\begin{abstract}
	A subgroup of $\mathrm{Homeo}_+(\mathbb{S}^1)$ is M\"obius-like if every element is conjugate to an element of $\mathrm{PSL}(2,\mathbb{R})$. In general, a M\"obius-like subgroup of $\mathrm{Homeo}_+(\mathbb{S}^1)$ is not necessarily (semi-)conjugate to a subgroup of $\mathrm{PSL}(2,\mathbb{R})$, as discovered by N. Kova\v{c}evi\'{c} [Trans.\ Amer.\ Math.\ Soc.\ \textbf{351} (1999), 4823--4835].
	Here we determine simple dynamical criteria for the existence of such a (semi-)conjugacy.
	We show that M\"obius-like subgroups of $\mathrm{Homeo}_+(\mathbb{S}^1)$ which are elementary (namely, preserving a Borel probability measure), are semi-conjugate to subgroups of $\mathrm{PSL}(2,\mathbb{R})$.
	On the other hand, we provide an example of elementary subgroup of $\mathrm{Diff}^\infty_+(\mathbb{S}^1)$ satisfying that every non-trivial element fixes at most 2 points, which is not isomorphic to any subgroup of $\mathrm{PSL}(2,\mathbb{R})$. Finally, we show that non-elementary, non-locally discrete subgroups acting with at most $N$ fixed points are conjugate to a dense subgroup of some finite central extension of $\mathrm{PSL}(2,\mathbb{R})$.
	
		\smallskip
	
	\noindent {\footnotesize \textbf{MSC 2020:} Primary 37C85, 57M60. Secondary 37B05, 37E05.}
\end{abstract}

\maketitle

\section{Introduction}

	Let $X$ be a topological space, and $G\le \homeo(X)$ a subgroup of homeomorphisms of $X$. Given $N\in\NN$, we say that $G$ \emph{has at most $N$ fixed points} if every non-trivial element of $G$ has at most $N$ fixed points.
When $N=0$, this means that the action of $G$ on $X$ is free.
A natural problem is to characterize actions with at most $N$ fixed points: 
\begin{question}\label{q.general}
	Given $N\in\NN$, which subgroups of $\homeo(X)$ have at most $N$ fixed points?
\end{question}

Here we will address this problem for the case when $X=\T$ is the circle (the case  $N=2$ and $X=\RR$ has been discussed by the second author in \cite{carnevale2022groups}, extending classical work of H\"older and Solodov). To simplify the discussion, we will rather consider subgroups of $\homeo_+(\T)$, the group of order-preserving homeomorphisms of the circle, which is of index 2 in $\homeo(\T)$. The classical group $\psl(2,\mathbb{R})$ of Möbius transformations acts on the circle with at most $2$ fixed points. This group may be seen in two ways:
\begin{itemize}
	\item it is the group of projective transformations of the projective line $\mathbb{R}\mathrm P^1$;
	\item it is also the group of isometries of the hyperbolic disc $\mathbb{D}$, which acts on its circle at infinity. 
\end{itemize}
For any integer $k\ge 1$, the $k$-fold central extension
\[1\to \ZZ_k\to \psl^{(k)}(2,\RR)\to \psl(2,\RR)\] 
also acts on the circle (which is homeomorphic to its $k$-fold covering), and it gives a natural example of group acting with at most $2k$ fixed points.
Somehow we want to understand to what extent groups with at most $2k$ fixed points are comparable to subgroups of $\psl^{(k)}(2,\mathbb{R})$. The case $k=1$ is already interesting on its own. For this, we say that a group $G$ is \emph{M\"obius-like} if every element is individually conjugate to an element in $\psl(2,\RR)$. In practice, this means that any element in $G$ is either conjugate to a rotation (\emph{elliptic}), or it admits exactly one fixed point (\emph{parabolic}), or it admits exactly two fixed points, one attracting and one repelling (\emph{hyperbolic}). On the other hand, a basic example of subgroup of $\homeo(\T)$ with at most 2 fixed points but not M\"obius-like, is the cyclic subgroup generated by a homeomorphism with exactly two fixed points, but which is not (topologically) hyperbolic: in this case both fixed points are (topologically) \emph{parabolic}, namely repelling on one side and attracting on the other.

Several works have exhibited conditions under which M\"obius-like groups are conjugate to subgroups of $\psl(2,\RR)$. Notably, a major result in this context has been obtained by Tukia \cite{Tukia} (in the torsion-free case), and then independently extended by Gabai \cite{Gabai} and Casson and Jungreis \cite{Casson-Jungreis}, after which subgroups of $\psl(2,\RR)$ are dynamical determined through the notion of convergence groups.
Let us also mention the more recent work of Baik \cite{Baik}, describing a characterization of subgroups of $\psl(2,\RR)$ in terms of invariant laminations on the circle.
In \cite{Ko1}, Kova\v{c}evi\'{c} proved that a M\"obius-like subgroup of $\homeo_+(\T)$ with a global fixed point is always semi-conjugate to a subgroup of $\psl(2,\RR)$. We extend this result to \emph{elementary} subgroups, that is, subgroups of $\homeo_+(\T)$ preserving a Borel probability measure on $\T$.  Concretely, a subgroup $G\le \homeo_+(\T)$ is elementary if either it admits a finite orbit or it is (continuously) semi-conjugate to a subgroup of rotations (see the discussion in \S \ref{ssc.elementary}, or Ghys \cite[Proposition 6.17]{ghys-circle}).
\begin{remark}
		This terminology is coherent with the usual notion of elementary subgroup of $\psl(2,\RR)$, which is a subgroup whose action on $\overline {\mathbb D}$ has a finite orbit. Concretely, an elementary subgroup of $\psl(2,\RR)$ is either conjugate in $\psl(2,\RR)$ to a subgroup of rotations, or has a finite orbit on the circle at infinity (which has to be either a fixed point or a periodic orbit of order 2); see Kim, Koberda, and Mj \cite[Proposition 2.1]{Kim-Koberda-Mj}.
\end{remark}

In practice, compared to the work of Kova\v{c}evi\'c, we also consider the case of periodic orbit of order 2, which is a bit harder than the other cases.

\begin{theoremA}\label{tA.elementary_mobius-Like}
	If $G< \homeo_+(\T)$ is an elementary, M\"obius-like subgroup, then $G$ is continuously semi-conjugate to an elementary subgroup of $\psl(2,\RR)$ and, moreover, the corresponding morphism $G\to \psl(2,\RR)$ is injective.
\end{theoremA}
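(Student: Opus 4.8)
The plan is to argue along the two branches built into the definition of elementary group: either $G$ preserves a non-atomic Borel probability measure, or $G$ has a finite orbit, and in the latter case to split further according to the cardinality $k$ of a finite orbit of minimal size. Throughout, the Möbius-like hypothesis enters through the trichotomy: a non-trivial element is elliptic (no fixed point, conjugate to a non-trivial rotation), parabolic (exactly one fixed point), or hyperbolic (exactly two fixed points). I would begin with the non-atomic case. If $G$ preserves a non-atomic probability measure $\mu$, then $\mu$ is invariant under every $g\in G$; since the only invariant probability measures of a parabolic or hyperbolic element are supported on its finite fixed-point set, no non-trivial element can be parabolic or hyperbolic, so every element of $G$ is elliptic. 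Collapsing the gaps of $\supp\mu$ yields a monotone degree-one semi-conjugacy $h$ onto a minimal, measure-preserving action, which by the classical argument (see \cite{navas-book}) is conjugate to a group of rotations; this produces a homomorphism $\rho\colon G\to\SO(2)\le\psl(2,\RR)$, with $\rho(g)$ the rotation of angle $\rot(g)$. Injectivity is immediate: if $\rot(g)=0$ then the elliptic element $g$ is conjugate to the identity rotation, hence $g=\idid$.

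Next, the finite-orbit case. Let $\mathcal O$ be a $G$-orbit of minimal cardinality $k$. Since $G$ preserves the cyclic order, its action on $\mathcal O$ factors through a homomorphism $\phi\colon G\to\ZZ_k$, whose kernel $G_0$ fixes $\mathcal O$ pointwise. If $k\ge 3$, any $g\in G_0$ fixes at least three points, forcing $g=\idid$; hence $G\hookrightarrow\ZZ_k$ is finite cyclic, generated by a finite-order (thus elliptic) element, so it is conjugate to a finite group of rotations inside $\SO(2)\le\psl(2,\RR)$, with $\rho$ an isomorphism onto its image. The case $k=1$ is a global fixed point: removing it turns the action into one on $\RR$ in which every non-trivial element has at most one fixed point, and the desired semi-conjugacy to the affine group, viewed as a point-stabilizer in $\psl(2,\RR)$, together with injectivity, is the content of Kova\v{c}evi\'{c}'s theorem \cite{Ko1}.

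The remaining, and main, case is $k=2$, say $\mathcal O=\{a,b\}$ with complementary arcs $I_1,I_2$. The first key step is a structural dichotomy. Any $g$ with $\phi(g)=1$ exchanges $a\leftrightarrow b$ and $I_1\leftrightarrow I_2$, hence is fixed-point free and therefore elliptic; since $g^2$ fixes $a,b$, this forces $\rot(g)=1/2$ and $g^2=\idid$, so every element outside $G_0$ is an involution. On the other hand, every non-trivial $g\in G_0$ fixes both $a$ and $b$, so it cannot be elliptic (an elliptic with a fixed point is trivial) nor parabolic (only one fixed point), and must be hyperbolic with $\fix(g)=\{a,b\}$; in particular $G_0$ acts freely on each open arc. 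By Hölder's theorem, $G_0$ is therefore abelian and the translation-number map $\tau\colon G_0\to\RR$ is an injective homomorphism realizing the action on $I_1$, up to a monotone semi-conjugacy, as an action by translations.

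To conclude I would fix in $\psl(2,\RR)$ the hyperbolic one-parameter subgroup $A\cong\RR$ fixing two boundary points $\alpha,\beta$ and the order-two rotation $s$ exchanging them, so that $sas^{-1}=a^{-1}$ on $A$, and define $\rho$ by sending $\tau(G_0)$ into $A$ and a fixed involution $g_0$ to $s$. The main work, and the step I expect to be the principal obstacle, is to assemble a single monotone degree-one map $h\colon\T\to\T$ realizing this $\rho$ as a semi-conjugacy: build $h$ on $I_1$ from the Hölder semi-conjugacy of the $I_1$-action, set $h(a)=\alpha$ and $h(b)=\beta$, and extend by $h|_{I_2}:=\rho(g_0)\circ h|_{I_1}\circ g_0^{-1}$. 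One must check that conjugation by $g_0$ inverts $\tau$, so that the two arcs receive opposite translation numbers matching the two-sided dynamics of $A$ on $\partial\mathbb{D}$, that $h$ is globally monotone, and that the intertwining $h\circ\gamma=\rho(\gamma)\circ h$ holds for all $\gamma\in G$. Injectivity of $\rho$ is then clear: on $G_0$ it is injective because $\tau$ is, while every element outside $G_0$ maps to an orientation-preserving isometry exchanging $\alpha$ and $\beta$, which is never the identity.
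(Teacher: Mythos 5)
Your case decomposition matches the paper's (non-atomic invariant measure / global fixed point / orbit of order two), and your structural analysis of the order-two case is correct: the non-trivial coset of $G_0$ consists of involutions of rotation number $1/2$, every non-trivial element of $G_0$ is hyperbolic with $\fix(g)=\{a,b\}$, and H\"older gives an injective translation-number homomorphism $\tau\colon G_0\to\RR$ for the action on $I_1$. But the argument is incomplete precisely at the step you label ``one must check'': that conjugation by the involution $g_0$ inverts $\tau$. This is not a routine verification --- it is the heart of the theorem, and (together with your $g^2=\idid$ step) the only place in this case where the M\"obius-like hypothesis does real work. A priori, conjugating by $g_0$ relates the translation number of $g$ on $I_1$ to that of $g$ on $I_2$, and H\"older's theorem on a single arc says nothing about how the two arcs are coupled. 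Indeed, Theorem \ref{t.smooth_example_non_isomorphic} of the paper exhibits an elementary group with at most $2$ fixed points for which the induced involution of $G_0\simeq\ZZ^2$ is the swap matrix rather than $-\mathrm{id}$, and that group is not isomorphic to any subgroup of $\psl(2,\RR)$; so without a proof of this point your $\rho$ need not even be a homomorphism, and the extension of $h$ to $I_2$ fails to be equivariant.

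The paper supplies the missing argument in two stages, both of which you would need. First (this is also the content hidden in the sub-case of two global fixed points with no swapping element, which you fold into $k=1$ by outsourcing to \cite{Ko1}): each $g\in G_0$ has \emph{opposite} translation numbers on the two arcs; otherwise one finds integers $N,M$ such that $g^Nf^M$ is positive on both arcs, hence has two parabolic fixed points, contradicting M\"obius-likeness. Second, for $f\in G_0$ and $g:=g_0fg_0^{-1}$ one analyses $H=\langle f,g\rangle$: if $H\simeq\ZZ$, a B\'ezout computation forces $g=f^{\pm1}$, and $g=f$ is excluded because an element centralized by a homeomorphism exchanging its two fixed points would have two fixed points of the same, hence parabolic, nature; if $H\simeq\ZZ^2$, the conjugacy-invariant ratio of translation numbers satisfies $\alpha/\beta=\beta/\alpha$, contradicting rational independence. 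Until these points are proved, the claim that conjugation by $g_0$ inverts $\tau$ --- and with it the intertwining relation $h\circ\gamma=\rho(\gamma)\circ h$ --- remains unjustified.
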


Our proof is a modern revised version of that from \cite{Ko1}, which is somehow difficult to digest as Kova\v{c}evi\'{c} needs to prove Solodov's theorem along way. Originally presented in \cite[Theorem 3.21]{Solodov}, this is now a classical result in the subject (see Ghys \cite[Theorem 6.12]{ghys-circle} and Farb--Franks \cite[Theorems 1.3 and 1.4]{Farb-Franks}). For further reference, it is more convenient to add a first item to its usual statement, which is a direct consequence of H\"older's theorem \cite[Theorem 6.10]{ghys-circle}.
\begin{theorem}[Solodov]\label{t.solodov} Let $G< \homeo_+(\mathbb{R})$ be a subgroup with at most $1$ fixed point.  Then 
	\begin{itemize}
		\item either the action of $G$ admits a unique fixed point $p$, in which case the restriction of the action of $G$ to $(-\infty,p)$ (respectively, to $(p,+\infty)$) is continuously semi-conjugate to an action by translations,  and the corresponding morphism $G\to \RR$ is injective,  or
		\item the action of $G$ is continuously semi-conjugate to an action by affine transformations, and the corresponding morphism $G\to \aff_+(\RR)$ is injective.
	\end{itemize}
\end{theorem}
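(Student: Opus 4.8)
The plan is to assume $G\neq\{\idid\}$ and to split the argument according to whether $G$ has a global fixed point, using as the single external ingredient H\"older's theorem: a group acting freely on $\RR$ by homeomorphisms is abelian and semi-conjugate to a group of translations, the semi-conjugacy being recorded by a translation-number homomorphism to $(\RR,+)$. The target of the statement is consistent with this, since every affine map $x\mapsto ax+b$ with $a>0$ has at most one fixed point (exactly one unless $a=1$), so $\aff_+(\RR)$ itself satisfies the hypothesis, and the translations are precisely the affine maps with no fixed point.

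First I would treat the case where $G$ has a global fixed point. It is unique, since two global fixed points would be fixed by every element, contradicting the hypothesis for any nontrivial $g$. Calling it $p$, each nontrivial $g\in G$ fixes only $p$, so no nontrivial element has a fixed point in $(-\infty,p)$ or in $(p,+\infty)$; hence $G$ acts freely on each half-line. Applying H\"older's theorem to the restriction to $(-\infty,p)$, and separately to $(p,+\infty)$, yields semi-conjugacies to actions by translations together with homomorphisms $G\to\RR$. Injectivity is automatic: a nontrivial element acts freely on the half-line, hence pushes every point strictly in one direction, so its translation number has a definite sign and is nonzero. This is the first alternative.

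Now suppose $G$ has no global fixed point. The heart of the argument is the set $N$ of fixed-point-free elements together with $\idid$, which I would show is a nontrivial normal subgroup. Normality is immediate, as conjugation preserves the property of having no fixed point. For closure under composition one uses the hypothesis: composing two fixed-point-free elements of the same sign again has no fixed point, while a product of opposite signs that acquired an isolated fixed point would, together with an element moving that point, have to be reconciled with the at-most-one-fixed-point condition. Nontriviality comes from a ping-pong argument applied to a hyperbolic element and a conjugate of it with a distinct fixed point, available precisely because there is no global fixed point. Since every nontrivial element of $N$ acts freely, H\"older's theorem provides a monotone coordinate $h$ semi-conjugating $N$ to translations, with injective translation-number homomorphism $\tau\colon N\to\RR$. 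I would then promote $h$ to a semi-conjugacy of all of $G$: for $g\in G$, normality gives an automorphism $n\mapsto gng^{-1}$ of $N$, and since $g$ preserves orientation this automorphism is order-preserving on $\tau(N)\subset\RR$, hence multiplication by a scalar $\lambda(g)>0$, i.e. $\tau(gng^{-1})=\lambda(g)\,\tau(n)$. The map $g\mapsto\lambda(g)$ is a homomorphism $G\to\RR_{>0}$, and reading $g$ in the coordinate $h$ one checks that $g$ acts as $t\mapsto\lambda(g)\,t+\beta(g)$, so $h$ conjugates $G$ to a subgroup of $\aff_+(\RR)$. Injectivity follows as before: a kernel element has $\lambda=1$ and trivial translation part, hence lies in $N$ with zero translation number, forcing it to be trivial.

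The main obstacle is the structural claim in the no-global-fixed-point case, namely that $N$ is a genuine nontrivial subgroup. This is where the at-most-one-fixed-point hypothesis does essential work: ruling out that a product of two fixed-point-free homeomorphisms of opposite sign creates a fixed point incompatible with the rest of the action, controlling the ``semi-stable'' elements (those with $g(x)-x$ of constant sign away from their single fixed point, which do not occur in $\aff_+(\RR)$), and manufacturing fixed-point-free elements out of hyperbolic ones via ping-pong. Once $N$ is understood, the remaining delicate point is checking that the single H\"older coordinate $h$ simultaneously linearizes every $g$, rather than merely conjugating its action on $N$ to a scaling; here both the discrete case $\tau(N)\cong\ZZ$ and the dense case should be handled, the discrete case being exactly where a genuine semi-conjugacy, and not a conjugacy, is required.
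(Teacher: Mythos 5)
The paper does not actually prove this statement: Theorem \ref{t.solodov} is quoted as a classical result, with references to Solodov, Ghys and Farb--Franks, so there is no in-paper argument to compare yours against. Judged on its own, your outline follows the standard route (H\"older's theorem on each half-line when there is a global fixed point; otherwise the normal subgroup $N$ of fixed-point-free elements, H\"older applied to $N$, and the scaling homomorphism $\lambda$), and the global-fixed-point case is complete modulo H\"older. But in the main case the two steps you defer to the final paragraph are not loose ends to be ``reconciled''; they are the actual content of the theorem, and as written you do not establish them.

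First, the closure of $N$ under composition: if $f>\idid$ and $g<\idid$ are both fixed-point-free, nothing formal prevents $fg$ from having exactly one fixed point, and ruling this out requires a preliminary classification of the one-fixed-point elements into hyperbolic-like (opposite signs of $g-\idid$ on the two sides of the fixed point) and parabolic-like (same sign), together with the lemma that, absent a global fixed point, parabolic-like elements cannot exist. (That lemma is forced by the conclusion itself: a parabolic-like element $g\ge\idid$ with $\fix(g)=\{p\}$ would have affine image $\ge\idid$ on an unbounded set and fixing $h(p)$, hence trivial, contradicting injectivity.) You name these ``semi-stable'' elements as an obstacle but give no mechanism for excluding them, and both the nontriviality of $N$ (your ping-pong presupposes a hyperbolic-like element) and its closure under products hinge on exactly this point. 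Second, the linearization step: the H\"older coordinate $h$ is built from $N$ alone, and the assertion that every $g\in G$ ``reads as $t\mapsto\lambda(g)t+\beta(g)$'' in it is precisely the semi-conjugacy to be proved. It is only immediate when $\tau(N)$ is dense; when $\tau(N)\cong\ZZ$ one gets $\lambda\equiv 1$, so $G$ centralizes the generator of $N$ and descends to the quotient circle, where a separate argument (again using the fixed-point hypothesis) is needed. You correctly flag both difficulties, but flagging them is not resolving them, so the proof is a correct skeleton rather than a complete argument.
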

In the opposite direction, Kova\v{c}evi\'{c} \cite{Ko2} showed that the above result fails for general M\"obius-like groups.

\begin{theorem}[Kova\v{c}evi\'{c}]
	There exist finitely generated (even finitely presented) M\"obius-like subgroups of $\homeo_+(\T)$ whose action is minimal (every orbit is dense) but not conjugate into $\psl(2,\RR)$.
\end{theorem}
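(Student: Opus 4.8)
The plan is to produce the group by hand as a free product of elliptic cyclic groups acting on $\T$ through a controlled ping-pong scheme, and then to obstruct any topological conjugacy into $\psl(2,\RR)$ by combining minimality with a conjugacy invariant. Concretely, I would set $G=\ZZ_{p_1}\ast\cdots\ast\ZZ_{p_k}$ with $k\ge 3$ and each $p_i\ge 2$; such a group is finitely presented, so the parenthetical ``even finitely presented'' comes for free. Each generator $x_i$ is realized as a homeomorphism conjugate to the rotation by $1/p_i$, hence elliptic of order $p_i$ and in particular individually conjugate into $\psl(2,\RR)$, and the generators are chosen with interlocking ping-pong arcs so that the only closed invariant sets are $\varnothing$ and $\T$, giving minimality. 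Non-elementarity then follows for $k\ge 3$ once the combinatorics is generic.

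The step I expect to be the main obstacle is upgrading ``the generators are M\"obius-like'' to ``every non-trivial element is M\"obius-like''. A reduced word $w=x_{i_1}^{a_1}\cdots x_{i_m}^{a_m}$ must be shown to be elliptic, parabolic or hyperbolic, i.e.\ to have at most two fixed points with the correct attracting/repelling local picture; this is a uniform statement over infinitely many reduced words and is exactly the delicate part. I would control it through the ping-pong partition: words not conjugate into a single factor should be forced into a North--South (hyperbolic) dynamics by the usual ping-pong contraction estimates, while words conjugate into a factor inherit the elliptic type of that factor. Making these estimates simultaneously valid for all words, and in particular excluding elements with three or more fixed points, is what constrains the precise choice of the generating homeomorphisms, and is where \kovacevic's construction is genuinely clever.

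Finally, to rule out conjugacy into $\psl(2,\RR)$, I would use that the action is minimal and non-elementary: were $hGh^{-1}\le \psl(2,\RR)$ for some $h\in\homeo_+(\T)$, its closure would be a closed non-elementary subgroup, hence either discrete or all of $\psl(2,\RR)$. The dense case I would exclude by arranging the ping-pong so that $G$ is $C^0$-discrete, meaning that there is $\varepsilon_0>0$ with $\sup_{x\in\T} d(g(x),x)\ge \varepsilon_0$ for every non-trivial $g\in G$: this property is preserved under topological conjugacy, since the conjugating homeomorphism is uniformly continuous, and it fails for every dense subgroup of $\psl(2,\RR)$; equivalently, a dense subgroup is non-locally discrete, whereas the construction is locally discrete. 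The discrete case reduces to showing the action is not conjugate to that of a Fuchsian group of the first kind: a discrete subgroup of $\psl(2,\RR)$ with minimal boundary action is a lattice, and all Fuchsian structures of a fixed signature induce topologically conjugate boundary actions, so such actions are rigid, determined up to conjugacy by finitely many combinatorial data (the rotation numbers of the generators and the cyclic/linking pattern of their fixed-point sets). I would therefore engineer these data in the ping-pong construction, for instance the cyclic order in which the elliptic fixed-point configurations interleave, to take a value realized by no Fuchsian representation of $G$, so that the minimal M\"obius-like action cannot be conjugate to any subgroup of $\psl(2,\RR)$. Verifying this incompatibility, together with the all-words M\"obius-like property of the second step, constitutes the bulk of the work.
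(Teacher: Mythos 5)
First, note that the paper does not prove this statement: it is quoted from \kovacevic's article \cite{Ko2}, so there is no internal proof to compare against. The closest the paper comes to describing the construction is Remark 3.4, which indicates that \kovacevic-type examples are obtained by blowing up orbits of circle actions and forming free/amalgamated products of groups already acting on circles, rather than by a bare ping-pong realization of $\ZZ_{p_1}\ast\cdots\ast\ZZ_{p_k}$. Your overall strategy (build a minimal M\"obius-like action, then split the obstruction into a dense case, killed by $C^0$-discreteness, and a discrete case, killed by a conjugacy invariant) is the right shape, and the $C^0$-discreteness argument for the dense case is correct as stated.

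However, there are two genuine gaps. The first is a structural tension in your step 2: if ``the usual ping-pong contraction estimates'' force North--South dynamics uniformly over all reduced words not conjugate into a factor, then the action is (essentially by definition, and certainly in every standard implementation of ping-pong) a convergence group action; by the Tukia--Gabai--Casson--Jungreis theorem, quoted in this very paper, such an action \emph{is} conjugate into $\psl(2,\RR)$, so your construction would fail to be a counterexample. The whole point of \kovacevic's examples is to be M\"obius-like \emph{without} being convergence actions, which means the M\"obius-like property of long words cannot come from uniform contraction estimates; you would need a genuinely different mechanism (this is where the blow-up/amalgam structure enters), and none is supplied. The second gap is the discrete case: you never exhibit a concrete conjugacy invariant separating your action from every Fuchsian boundary action of the abstract group $G$. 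The invariants you name (rotation numbers of generators, cyclic order of fixed points) are not shown to be unrealizable by any discrete faithful representation, and one must quantify over all markings, i.e.\ all precompositions by automorphisms of $G$, as well as over cocompact and non-cocompact realizations. The standard obstruction here is much more specific: one arranges for two hyperbolic elements $g,h$ with $\fix(g)\cap\fix(h)$ a single point, a configuration preserved by topological conjugacy and impossible in a \emph{discrete} subgroup of $\psl(2,\RR)$; combined with $C^0$-discreteness this closes both cases. Without such a concrete invariant, the final step of your argument is a statement of intent rather than a proof.
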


When a subgroup of $\homeo_+(\T)$ has at most 2 fixed points but is not semi-conjugate into $\psl(2,\RR)$ (or not even Möbius-like), one may still wonder whether it is \emph{abstractly} isomorphic to a subgroup of $\psl(2,\RR)$. Building on the proof of Theorem \ref{tA.elementary_mobius-Like}, we settle this problem for the negative. 

\begin{theoremA}\label{t.smooth_example_non_isomorphic}
	There exists a finitely generated, elementary group of smooth ($C^\infty$) circle diffeomorphisms, with at most 2 fixed points, and which is not isomorphic to any subgroup of $\mathrm{PSL}(2,\mathbb{R})$.
\end{theoremA}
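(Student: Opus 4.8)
The plan is to construct an explicit finitely generated group of smooth circle diffeomorphisms with at most 2 fixed points that is abstractly non-isomorphic to any subgroup of $\psl(2,\RR)$. Since Theorem \ref{tA.elementary_mobius-Like} shows that elementary M\"obius-like groups \emph{are} semi-conjugate into $\psl(2,\RR)$ (with an injective morphism), the example must fail to be M\"obius-like: it will contain parabolic-type elements with two fixed points rather than one. The natural template is Solodov's dichotomy (Theorem \ref{t.solodov}): I would look for a group fixing a point (or a periodic orbit of order 2) so that the action on the complementary interval is, up to semi-conjugacy, by affine transformations. The key idea is that the abstract isomorphism type of a subgroup of $\aff_+(\RR)\cong \RR\ltimes\RR$ is strongly constrained, so I can engineer an affine-like group whose isomorphism type is incompatible with $\psl(2,\RR)$.

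\medskip

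\noindent First I would isolate an algebraic obstruction inside $\psl(2,\RR)$. The solvable subgroups of $\psl(2,\RR)$ are very restricted: a torsion-free solvable subgroup is either abelian or metabelian of a specific affine form, and crucially every element of infinite order in a solvable subgroup of $\psl(2,\RR)$ is either elliptic (hence the subgroup is abelian, conjugate into $\SO(2)$) or is conjugate into the upper-triangular (affine) subgroup. The concrete target is to find a finitely generated solvable (indeed metabelian) group $G$ that embeds as circle diffeomorphisms with at most 2 fixed points, but whose \emph{algebraic structure} — for instance the structure of its abelianization, or the isomorphism type of a derived subgroup, or the set of roots admitted by its elements — cannot be realized by any solvable subgroup of $\psl(2,\RR)$. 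A clean choice is to take $G=\ZZ\ltimes \ZZ[1/mn]$ for suitable coprime $m,n$, or more robustly a group of the form $\ZZ\ltimes_A \ZZ^2$ where $A\in\mathrm{GL}(2,\ZZ)$ acts with eigenvalues that force two independent ``translation directions''; such a group cannot sit inside the rank-one affine group $\aff_+(\RR)$, which is the only nonabelian solvable possibility up to conjugacy in $\psl(2,\RR)$.

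\medskip

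\noindent The construction then has two halves. Algebraically, I would verify that the chosen $G$ is not isomorphic to any subgroup of $\psl(2,\RR)$ by ruling out the three cases: $G$ is nonabelian so it is not conjugate into $\SO(2)$; $G$ is solvable but not metabelian-of-affine-type (or has too large an abelian normal subgroup), so it is not conjugate into $\aff_+(\RR)$; and $G$ is amenable/elementary so by the classification it cannot be a nonelementary subgroup of $\psl(2,\RR)$ either. Dynamically, I would realize $G$ by smooth diffeomorphisms fixing a common point $p\in\T$, so that the action on $\T\setminus\{p\}\cong\RR$ is a faithful action by diffeomorphisms of the line; here I borrow the philosophy of Theorem \ref{t.solodov}, building the generators as $C^\infty$ diffeomorphisms that behave like the prescribed affine maps near the ends but are modified in the interior so that each nontrivial element has at most 2 fixed points (one of which is $p$). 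The smoothness and the ``at most 2 fixed points'' condition are the delicate analytic requirements: I would control the fixed-point count by choosing generators whose non-trivial products have strictly positive or strictly negative displacement away from a single interior crossing, using an explicit perturbation supported away from $p$.

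\medskip

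\noindent The main obstacle I anticipate is the simultaneous satisfaction of three competing demands: smoothness, the uniform ``at most 2 fixed points'' bound across \emph{all} nontrivial elements of an infinite group, and the prescribed abstract isomorphism type. Controlling the fixed-point count for every element of an infinite (non-cyclic) group is genuinely hard, because products of diffeomorphisms can acquire extra fixed points; the affine model is useful precisely because affine maps have at most one fixed point on $\RR$, and one wants the smooth realization to inherit this rigidity. I expect the resolution to lean on the structure theory developed in the proof of Theorem \ref{tA.elementary_mobius-Like} — in particular on the affine semi-conjugacy produced by Solodov's theorem — so that the fixed-point bound is transported from the affine model to the smooth realization, with the smoothing done so carefully that no new fixed points are created. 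The final step is then purely algebraic: exhibit a group-theoretic invariant (e.g.\ a subgroup isomorphic to $\ZZ^2$ acting as independent translations, which is impossible in $\aff_+(\RR)$) that certifies non-embeddability into $\psl(2,\RR)$.
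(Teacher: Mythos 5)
There is a genuine gap, and it is structural. Your main construction route --- a group with a global fixed point $p$ whose action on $\T\setminus\{p\}\cong\RR$ is affine-like --- cannot produce a counterexample: such a group acts on the line with at most $1$ fixed point, so Solodov's theorem (Theorem \ref{t.solodov}), the very result you use as a template, already supplies an \emph{injective} homomorphism $G\to\aff_+(\RR)<\psl(2,\RR)$ (or $G\to\RR$ in the two-global-fixed-points case). Every group built this way is therefore automatically abstractly isomorphic to a subgroup of $\psl(2,\RR)$. Your candidate algebraic certificates also fail: $\ZZ\ltimes\ZZ[1/mn]$ embeds into $\aff_+(\RR)$ as $\langle x\mapsto \tfrac{m}{n}x,\ x\mapsto x+1\rangle$; for a suitable hyperbolic $A\in\mathrm{GL}(2,\ZZ)$ with quadratic unit eigenvalue $\lambda$, the group $\ZZ\ltimes_A\ZZ^2$ embeds as $\langle x\mapsto\lambda x,\ x\mapsto x+1,\ x\mapsto x+\lambda\rangle$ (multiplication by $\lambda$ preserves the rank-$2$ subgroup $\ZZ+\ZZ\lambda\subset\RR$); and $\ZZ^2$ embeds into any one-parameter subgroup of $\psl(2,\RR)$ (e.g.\ as $\ZZ+\ZZ\sqrt2\subset\RR$), so ``a subgroup isomorphic to $\ZZ^2$'' obstructs nothing. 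Solvability is simply not the right invariant here.

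The configuration you mention only parenthetically --- a periodic orbit of order $2$ --- is in fact the \emph{only} elementary configuration where non-embeddability can occur, and the obstruction is the conjugation action of the $2$-torsion, not the isomorphism type of the abelian part. The paper takes, in projective coordinates, $f$ equal to $x\mapsto\lambda x$ on $[0,\infty]$ and $x\mapsto\mu x$ on $[\infty,0]$ with $\log\lambda,\log\mu$ rationally independent, sets $g=R_{1/2}fR_{1/2}$, and obtains $G=\langle f,R_{1/2}\rangle\simeq\ZZ^2\rtimes_A\ZZ_2$ where $A$ is the coordinate swap. Claim \ref{claim:elementary} in the proof of Theorem \ref{tA.elementary_mobius-Like} forces the involution in any elementary M\"obius-like subgroup of $\homeo_+(\T)$ with an order-$2$ orbit to act by $-\mathrm{id}$ on the translation group, while the swap fixes a copy of $\ZZ$ (so the two semidirect products have different centers); the atomless and global-fixed-point cases are excluded by non-abelianness and by the presence of $2$-torsion, respectively. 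Smoothness is obtained by conjugating by a homeomorphism infinitely flat at $0$ and $\infty$, and the bound of $2$ fixed points is immediate from the explicit piecewise-M\"obius formulas --- no perturbation argument is needed. To repair your proposal, replace the global-fixed-point model by this order-$2$-orbit model and replace the solvability invariants by the isomorphism type of $\ZZ^2\rtimes\ZZ_2$.
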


For the next main result, we will consider the following topological condition (inspired by work of Rebelo \cite{rebelo} in the real-analytic setting). To state it, recall (see \cite[Proposition 5.6]{ghys-circle}) that any subgroup $G\le \homeo_+(\T)$ admits a non-empty $G$-invariant compact subset $\Lambda\subset \T$ which is minimal with respect to inclusion, called a \emph{minimal invariant subset}; moreover, if $G$ does not have any periodic orbit, the minimal invariant subset is always unique, and it is either the whole circle or a Cantor set. In particular, when $G\le \homeo_+(\T)$ is a non-elementary subgroup, we can refer to \emph{the} minimal invariant subset of $G$. We also say that  a non-empty interval $I\subset \T$ is \emph{wandering} (for $G$) if $\Lambda\cap I=\varnothing$.

\begin{definition}\label{d.local_discrete}
	A non-elementary subgroup $G\le \mathrm{Homeo}_+(\mathbb{S}^1)$ of circle homeomorphisms is \emph{locally discrete} if for every non-wandering interval $I\subset \T$, the identity is isolated among the collection of restrictions $\{g|_I:g\in G\}\subset C^0(I;\T)$, with respect to the $C^0$ topology.
\end{definition}

It is clear that if a subgroup is locally discrete, then it is discrete in the usual sense. In fact, we will see that in the $C^0$ topology, the two properties are equivalent (Lemma \ref{l.non_loc_discrete_entao_non_discrete}), whereas in the real-analytic setting of Rebelo \cite{rebelo}, the equivalence is an open problem (see the discussion in Alvarez \textit{et al.}\ \cite[Remark 1.9]{PingPong2}).
The condition of considering non-wandering intervals only in the definition sounds technical, but it is actually the appropriate one (see Proposition \ref{l.non-elementary_loc_non-discrete}).  For non-locally discrete groups with at most $N$ fixed points, we prove the following.

\begin{theoremA}\label{mainthmA.non-discrete}
	Let $G<\homeo_+(\T)$ be a non-elementary, non-locally discrete subgroup with at most $N$ fixed points. Then, there exists $k\ge 1$ such that $G$ is conjugate to a dense subgroup of $\psl^{(k)}(2,\RR)$.
\end{theoremA}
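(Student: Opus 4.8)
The plan is to study the topological closure $\overline G\le\homeo_+(\T)$ in the $C^0$ topology (which makes $\homeo_+(\T)$ a topological group, so that $\overline G$ is again a subgroup), to produce inside $\overline G$ a nontrivial finite-dimensional connected Lie group $H$ normalized by $G$, to identify $H$ with $\psl^{(k)}(2,\RR)$, and finally, by a normalizer computation, to conclude that $G$ is dense in $H$ up to conjugacy. \textbf{Step 1 (flows and minimality).} Because $G$ is non-locally discrete, there are a non-wandering interval $I$ and elements $g_n\ne\idid$ with $g_n|_I\to\idid$ uniformly. The hypothesis of at most $N$ fixed points enters here in an essential way: each $g_n$ is monotone between consecutive fixed points and has at most $N$ of them, so the displacement $g_n-\idid$ changes sign a uniformly bounded number of times. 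This is exactly the control required by a Nakai--Loray--Rebelo type renormalization to extract, from well-chosen iterates and commutators of the $g_n$, a nontrivial continuous vector field $X$ whose time maps $\phi^t_X$ lie in $\overline G$, with the additional feature that $X$ has at most $N$ zeros. Once a nontrivial flow exists, the action is minimal: the maps $\phi^t_X$ preserve the closed $G$-invariant minimal set $\Lambda$, and on any subinterval on which $X$ does not vanish every $\phi^t_X$-orbit is dense, so $\Lambda$ meets that subinterval in a dense, hence full, set; thus $\Lambda$ contains an interval and, by minimality, $\Lambda=\T$. In particular there are no wandering intervals.

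\textbf{Step 2 (a finite-dimensional Lie group, and its identification).} Let $\mathfrak g$ be the set of complete continuous vector fields on $\T$ whose flow is contained in $\overline G$. Since $\overline G$ is a closed subgroup, the usual approximations of $\phi^s_{X+Y}$ by $\phi^s_X\phi^s_Y$ and of the flow of $[X,Y]$ by commutators $[\phi^s_X,\phi^t_Y]$ show that $\mathfrak g$ is a Lie algebra, nonzero by Step 1 and invariant under conjugation by $\overline G$. As in Step 1, every nonzero $X\in\mathfrak g$ has at most $N$ zeros, so evaluation at $N+1$ distinct points gives an injection $\mathfrak g\hookrightarrow\RR^{N+1}$ and $\dim\mathfrak g\le N+1$. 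Integrating $\mathfrak g$ produces a nontrivial connected finite-dimensional Lie group $H\le\overline G$ of circle homeomorphisms, normal in $\overline G$ and hence normalized by $G$. A fixed point or a finite orbit of $H$ would be an $\overline G$-invariant finite set, impossible since $\overline G\supseteq G$ acts minimally; therefore $H$ acts minimally, hence transitively. By the classification of transitive actions of connected finite-dimensional Lie groups on the circle, $H$ is conjugate either to the rotation group or to $\psl^{(k)}(2,\RR)$ for some $k\ge 1$; the rotation case is excluded because it would force $G$ to be elementary. Hence, after conjugation, $H=\psl^{(k)}(2,\RR)$ with $2k\le N$.

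\textbf{Step 3 (normalizer and density).} It remains to see that $\psl^{(k)}(2,\RR)$ is self-normalizing in $\homeo_+(\T)$: a normalizing homeomorphism induces an orientation-preserving automorphism of $\psl^{(k)}(2,\RR)$, necessarily inner, and the centralizer of $\psl^{(k)}(2,\RR)$ is trivial, so the homeomorphism already belongs to $\psl^{(k)}(2,\RR)$. Consequently $H\le\overline G\le N(H)=H$, whence $\overline G=H=\psl^{(k)}(2,\RR)$; this is precisely the assertion that $G$ is a dense subgroup of $\psl^{(k)}(2,\RR)$, as required.

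The main obstacle is Step 1. Passing from the unstructured $C^0$ data $g_n|_I\to\idid$ to an honest one-parameter group inside $\overline G$, while keeping the number of zeros of the limiting field bounded by $N$, is the technical heart of the matter: the bound on fixed points plays the role that real-analyticity plays in Rebelo's setting, furnishing the uniform bound on sign changes that both makes the renormalization converge and prevents the limit field from acquiring too many zeros. Everything downstream---the dimension count, the classification of transitive circle actions, and the normalizer computation---is then structural.
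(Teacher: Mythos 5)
Your overall architecture (pass to the closure $\overline G$, find a transitive connected group inside it, identify it, conclude by a normalizer argument) is reasonable, but Step~1 — which you yourself flag as the technical heart — contains a genuine gap rather than a compressible technicality. The Nakai--Loray--Rebelo renormalization produces local flows from sequences of maps tending to the identity by exploiting \emph{regularity}: real-analyticity (or at least $C^2$/$C^1$ control) gives the equicontinuity and derivative bounds that make the renormalized sequence converge to the time-$t$ maps of a vector field. A uniform bound on the number of sign changes of $g_n-\idid$ is a purely combinatorial constraint; it gives no modulus-of-continuity control, no way to choose the time reparametrizations coherently, and no reason for any renormalized subsequence to converge to a one-parameter group rather than to an arbitrary monotone family. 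In the $C^0$ category there is, moreover, no Lie bracket of the "vector fields" you would obtain, so the claim in Step~2 that $\mathfrak g$ is a Lie algebra closed under $[\cdot,\cdot]$ has no meaning without first upgrading regularity. The paper itself signals that this route is delicate: it remarks that the analogous statement for $C^1$ (even real-analytic) non-local discreteness is open precisely because the flow-extraction step is not available. So as written, Step~1 is an unproven assertion on which everything downstream depends.

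For comparison, the paper avoids constructing any flow. It first shows that non-local discreteness forces non-discreteness and minimality (Lemma \ref{l.non_loc_discrete_entao_non_discrete}), that $\overline G$ still has at most $N$ fixed points (Lemma \ref{l.locally_globally}), and that one can produce fixed-point-free elements of $G$ arbitrarily close to the identity (Lemma \ref{l.non-discrete_without_fixed_points}). Composing such elements with carefully controlled rotation numbers yields a Cauchy sequence in $G$ whose limit in $\overline G$ has irrational rotation number (Lemma \ref{l.exists_irrational}); the bound on fixed points then rules out an exceptional minimal set for that limit, so it is conjugate to an irrational rotation and $\overline G$ contains a conjugate of $\SO(2)$ (Lemma \ref{l.contains_irrational}). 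This supplies the non-constant continuous path needed to invoke the Giblin--Markovic classification of closed transitive subgroups of $\homeo_+(\T)$ (Theorem \ref{t.giblin-markovic}), whose first and third alternatives are excluded by non-elementarity and by Lemma \ref{l.locally_globally}, respectively. If you want to salvage your outline, the honest fix is to replace Step~1 by this rotation-number construction (or by an equally detailed substitute), since the renormalization argument you invoke is not known to work under your hypotheses.
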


\begin{remark}
	The degree of the extension $k$ is given by half of the largest number of fixed points for non-trivial elements in $G$ (which is thus necessarily even).
\end{remark}

\begin{corA}
	Let $G<\homeo_+(\T)$ be a non-elementary, non-locally discrete subgroup with at most $2$ fixed points. Then, $G$ is conjugate to a dense subgroup of $\psl(2,\RR)$.
\end{corA}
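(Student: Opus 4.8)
The plan is to deduce this immediately from Theorem~\ref{mainthmA.non-discrete}. Since $G$ is non-elementary, non-locally discrete, and has at most $2$ fixed points, Theorem~\ref{mainthmA.non-discrete} applies with $N=2$ and produces an integer $k\ge 1$ together with a conjugacy carrying $G$ onto a dense subgroup of $\psl^{(k)}(2,\RR)$. It then remains only to show that $k=1$, after which $\psl^{(k)}(2,\RR)=\psl(2,\RR)$ and the claim follows verbatim.

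To pin down $k$, I would analyse the maximal number of fixed points occurring among non-trivial elements of $\psl^{(k)}(2,\RR)$. Under the $k$-fold covering $\psl^{(k)}(2,\RR)\to\psl(2,\RR)$, the circle acted upon upstairs is the $k$-fold cover of the circle $\T$ acted upon by $\psl(2,\RR)$, and every fixed point on $\T$ of an element of $\psl(2,\RR)$ lifts to exactly $k$ fixed points upstairs. In particular a hyperbolic element of $\psl(2,\RR)$, which has exactly two fixed points on $\T$, lifts to an element of $\psl^{(k)}(2,\RR)$ with exactly $2k$ fixed points. The hyperbolic elements form a non-empty open subset of $\psl^{(k)}(2,\RR)$, so by density the image of $G$ contains an element with exactly $2k$ fixed points. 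The hypothesis that $G$ has at most $2$ fixed points then forces $2k\le 2$, that is $k=1$; this is precisely the content of the remark following Theorem~\ref{mainthmA.non-discrete}.

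I expect no genuine obstacle here beyond this bookkeeping: the whole substance of the statement is already carried by Theorem~\ref{mainthmA.non-discrete}, and the corollary is simply its specialisation to $N=2$, once one observes that the extension degree $k$ is controlled from below by the largest fixed-point count realised in a dense subgroup.
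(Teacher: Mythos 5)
Your proposal is correct and matches the paper's intended argument: the corollary is the case $N=2$ of Theorem~\ref{mainthmA.non-discrete}, with $k=1$ forced exactly as in the remark following that theorem, since a dense subgroup of $\psl^{(k)}(2,\RR)$ must meet the non-empty open set of elements with $2k$ hyperbolic fixed points. (One small imprecision: among the $k$ lifts of a hyperbolic element of $\psl(2,\RR)$ only one has fixed points, namely $2k$ of them, while the others are fixed-point free; this does not affect your conclusion.)
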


\begin{remark}
	Another consequence of Theorem \ref{mainthmA.non-discrete} is that, if moreover the group $G$ is finitely generated, it necessarily contains elements with irrational number (see Kim, Koberda, and Mj \cite[Lemma 2.24]{Kim-Koberda-Mj}). Actually, a key step of the proof (see Lemma \ref{l.contains_irrational}) is to show that the \emph{closure} $\overline G$ contains an element with irrational rotation number. Let us also comment that when considering finitely generated subgroups $G\le \mathrm{Diff}^1_+(\T)$ which are non-locally discrete with respect to the \emph{$C^1$ topology}, the analogue conclusion is still open, even in real-analytic regularity (see Matsuda \cite{Matsuda2009}).
\end{remark}

Let us describe the structure of the paper. After introducing the main terminology in Section \ref{s.preliminaries}, we discuss Theorems \ref{tA.elementary_mobius-Like} and \ref{t.smooth_example_non_isomorphic} in Section \ref{sc.elementary}. The last section is devoted to the proof of Theorem \ref{mainthmA.non-discrete}. Let us give a quick outline of the proof. First of all, we remark that the action of $G$ is minimal (Lemma \ref{l.non_loc_discrete_entao_non_discrete}). The next step is to understand which elements belong to the $C^0$ closure $\overline G$: we prove that $\overline{G}$ has also at most $N$ fixed points (Lemma \ref{l.locally_globally}) and, up to conjugacy, it contains the group of rotations $\SO(2)$ (Lemma \ref{l.contains_irrational}). We then conclude by using the classification by Giblin and Markovic \cite{GiblinMarkovic} (see Theorem \ref{t.giblin-markovic}) of closed subgroups of $\homeo_+(\T)$ acting transitively and containing a non-constant continuous path.

\section{Preliminaries}\label{s.preliminaries}

\subsection{Topology of the circle and of the group of circle homeomorphisms} In what follows, we let $\homeo_+(\T)$ be the group of orientation-preserving homeomorphisms of the circle. Here, the circle $\T$ will be considered as the one-dimensional torus $\RR/\ZZ$, with its normalized Lebesgue measure. Given a Borel subset $A\subset \T$, we denote by $|A|$ its Lebesgue measure. Any $f\in \homeo_+(\T)$ lifts to a homeomorphism $F:\RR\to \RR$ commuting with integer translations, and this lift is defined uniquely, up to integer translations. This allows to identify the universal cover of $\homeo_+(\T)$ with the group $\homeo_{\ZZ}(\RR)$ of homeomorphisms of the line commuting with integer translations.
If $x,y\in \T$ are two distinct points, we denote by $(x,y)$ the (open) interval of points $z\in \T$ such that the triple $(x,z,y)$ is positively ordered. When $x=y$, $(x,x)$ is by convention, just the empty set. Similarly we denote other kind of intervals $[x,y]$, $[x,y)$, $(x,y]$. We can then define the distance $d(x,y)=\min \{|(x,y)|,|(y,x)|\}$.  When $d(x,y)<1/2$, this is the usual distance between points in any Euclidean chart containing $x,y$, so that we can write (with slight abuse of notation) $|x-y|$ instead of $d(x,y)$.
We want to consider the $C^0$ topology on $\homeo_+(\T)$, and in order to quantify this, we consider the distance $d_{\infty}(f,g):=\sup_{x\in \T}d(f(x),g(x))$. It is well-known that the distance $d_\infty$ on $\homeo_+(\T)$ is not complete, and for this reason we will also consider the additional distance $d_{\mathcal C^0}(f,g):=d_\infty(f,g)+d_\infty(f^{-1},g^{-1})$, which makes $\homeo_+(\T)$ complete. When $d_\infty(f,g)<1/2$, we can take any lifts $\widetilde{f},\widetilde{g}\in \homeo_{\ZZ}(\RR)$ of $f$ and $g$, respectively, such that $|\widetilde{f}(0)-\widetilde{g}(0)|<1$, and we have $d_\infty(f,g)=\|\widetilde{f}-\widetilde{g}\|$, where $\|\cdot\|$ is the usual uniform norm on continuous functions. Because of this, when $d_\infty(f,g)<1/2$, we will also write $\|f-g\|$ for the distance $d_\infty(f,g)$, and tacitly make computations in a local Euclidean chart.
This also justifies the following definition.
\begin{definition}\label{d.close-to-identity-positive}
	Let $f\in\homeo_+(\T)$ be a circle homeomorphism  such that $d_\infty(f,\idid)<1/2$. We say that $f$ is \emph{positive} if for every $x\in\T$, we have $f(x)\in(x,x+\frac{1}{2})$. When $f^{-1}$ is positive, we say that $f$ is \emph{negative}.
	Given an interval $I\subset \T$, we say that $f$ is positive (respectively, negative) on $I$ if the previous conditions hold simply for any $x\in I$.
\end{definition}

\begin{remark}\label{rem.composition_positive}
	Observe that the definition above implies that a positive (respectively, negative) homeomorphism does not have any fixed point. Moreover, when $f,g\in \homeo_+(\T)$ are positive, the composition $fg$ is also positive, provided that $d_\infty(f,\idid)+d_\infty(g,\idid)<1/2$.
\end{remark}

As we work with homeomorphisms having a prescribed bound on the number of fixed points, it is fundamental to have in mind that if $f,g\in \homeo_+(\T)$ and $x\in \T$ are such that $f(x)=g(x)$, then $g^{-1}f(x)=x$. When this occurs, we say that $f$ and $g$ \emph{cross}. More specifically, $I\subset \T$, we say that $f$ and $g$ cross \emph{in $I$} if there exists $x\in I$ such that $f(x)=g(x)$. More specifically, we say that $f$ and $g$ cross \emph{hyperbolically} at $x$, if $x$ is a (topologically) hyperbolic fixed point (either attracting, or repelling) for the composition $fg^{-1}$: equivalently, one has \[(f(x-\eta)-g(x-\eta))(f(x+\eta)-g(x+\eta))<0\] for any sufficiently small $\eta>0$.  
We will also say that $f$ and $g$ cross \emph{weakly hyperbolically} in an interval $I$ if there exists an interval $J=[x,y]\subset I$ (possibly, $x=y$) such that $f\vert_J=g\vert_J$ and 
\[(f(x-\eta)-g(x-\eta))(f(y+\eta)-g(y+\eta))<0\]
for any sufficiently small $\eta>0$.
\begin{remark}\label{rem:whyp_stable}
	Having a weakly hyperbolically crossing in some open interval $I$ is stable under sufficiently small $C^0$ perturbations.
\end{remark}

We will be interested in counting the number of points for which $f(x)=g(x)$, so that we will say that $f$ and $g$ cross once, twice, 3 times, and so on.

\subsection{Semi-conjugacy of group actions on the circle}

A group action on a topological space $X$ is semi-conjugate to an action of the group on a topological space $Y$ if there exists a continuous equivariant map from $X$ to $Y$.
This classical notion of semi-conjugacy in dynamical systems can be slightly modified to get an equivalence relation on group actions on the line and the circle. For a detailed discussion, we refer to the monograph by Kim, Koberda, and Mj \cite{Kim-Koberda-Mj}, from which we borrow some terminology.

Let $G$ be a group, and $\varphi,\psi:G\to \homeo_+(\RR)$ two homomorphisms. We say that $\varphi$ and $\psi$ are \emph{conjugate} if there exists $h\in \homeo_+(\RR)$ which is \emph{$(\varphi,\psi)$-equivariant}: $h(\varphi(g)(x))=\psi(g)(h(x))$ for any $g\in G$ and $x\in \RR$. We say that $\varphi$ and $\psi$ are \emph{semi-conjugate} if there exists a non-decreasing map $h:\RR\to \RR$ such that $|h(x)|\to \infty$ as $|x|\to \infty$, and which is $(\varphi,\psi)$-equivariant. When the map $h$ is continuous, we recover the usual notion of continuous semi-conjugacy from $\varphi$ to $\psi$.

Similarly, when $\varphi,\psi:G\to \homeo_+(\T)$, we say that $\varphi$ and $\psi$ are conjugate if there exists a $(\varphi,\psi)$-equivariant circle homeomorphism. However, for extending the notion of semi-conjugacy to this situation, we have to pass through actions on the line: using the cyclic central extension
\[
1\to \ZZ\to \homeo_{\ZZ}(\RR)\to \homeo_+(\T)\to 1,
\]
we say that $\varphi,\psi:G\to \homeo_+(\T)$ are semi-conjugate if we can find a cyclic central extension
\[
1\to \ZZ \to \widetilde{G} \to G\to 1,
\]
and semi-conjugate homomorphisms $\widetilde{\varphi},\widetilde{\psi}:\widetilde{G}\to \homeo_{\ZZ}(\RR)$ (in the previous sense), which send any integer $n\in \ZZ\subset \widetilde G$ to the translation by $n$, and lift the homomorphisms $\varphi$ and $\psi$, respectively: in short, the following diagram must commute
\[ \begin{tikzcd}
	& & \widetilde{G} \arrow{r} \arrow{d}{\widetilde{\varphi},\widetilde{\psi}} & G \arrow[swap]{d}{\varphi,\psi} \arrow{rd} \\
	1 \arrow{r} &\ZZ \arrow{r} \arrow{ur} & \homeo_{\ZZ}(\RR) \arrow{r} & \homeo_+(\T) \arrow{r} & 1
\end{tikzcd}
\]
When $\widetilde\varphi$ is continuously semi-conjugate to $\widetilde\psi$, we say that $\varphi$ is continuously semi-conjugate to $\psi$. Note that this does not coincide with usual notion in dynamical systems, as we only allow continuous semi-conjugacies through degree-1 maps of the circle.

With abuse of terminology, we will also say that the subgroups $\varphi(G)$, $\psi(G)$ are (continuously) \mbox{(semi-)}conjugate, and more generally, we say that two subgroups of homeomorphisms $G_1$ and $G_2$ are (continuously) \mbox{(semi-)}conjugate, if there exist a group $G$ and surjective homomorphisms $\varphi_i:G\to G_i$ (for $i\in \{1,2\}$) which are (continuously)  \mbox{(semi-)}conjugate.

\section{Elementary groups}\label{sc.elementary}

\subsection{Basic results}\label{ssc.elementary}
Recall from the introduction that a subgroup $G$ of $\homeo_+(\T)$ is \emph{elementary} if its action preserves a Borel probability measure.
Recall from the introduction that an elementary subgroup $G< \homeo_+(\T)$, either admits a finite orbit or  is semi-conjugate to a subgroup of rotations.
We assume that the reader is familiar with the notion of rotation number for circle homeomorphisms. It is well-known that given a circle homeomorphism $g\in \homeo_+(\T)$, we can compute its rotation number $\rot(g)\in \RR/\ZZ\cong\T$ by taking \emph{any} $g$-invariant Borel probability measure $\mu$ and point $x\in \T$, and measure how much $x$ is displaced: $\rot(g)=\mu[x,g(x))$. Therefore, if $G< \homeo_+(\T)$ is an elementary subgroup, with invariant Borel probability measure $\mu$, then one has
\[
\rot(fg)=\mu[x,fg(x))=\mu[x,g(x))+\mu[g(x),fg(x))=\rot(g)+\rot(f),
\]
which means that the restriction to $G$ of the function rotation number defines a homomorphism. More precisely, it gives a homomorphism $\rho_G:G\to \SO(2)$ defined by $g\mapsto R_{\rot(g)}$ (which does not depend on the choice of the invariant measure $\mu$), and the actions of $G$ and $\rho_G(G)$ are semi-conjugate (according to the terminology introduced in the previous section). Moreover, if the action of $G$ has no finite orbit, then $G$ is continuously semi-conjugate to $\rho_G(G)$ (see Ghys \cite[Proposition 6.17]{ghys-circle}).
We have the following basic result.

\begin{lemma}\label{l.kernel_fixes_measure}
	Let $G< \homeo_+(\T)$ be a subgroup with an invariant Borel probability measure $\mu$ on $\T$. Then, the kernel of the homomorphism $\rho_G:G\to \SO(2)$ fixes $\supp(\mu)$ pointwise.
\end{lemma}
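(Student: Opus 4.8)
The plan is to fix an element $g$ in the kernel of $\rot$ and show directly that every point of $\supp(\mu)$ is fixed by $g$. First I would record the two structural facts I intend to use: since $\mu$ is $G$-invariant and $\supp(\mu)$ is the smallest closed set of full measure, $\supp(\mu)$ is $G$-invariant; and since $\rot(g)=\mu[x,g(x))$ is the Poincaré rotation number computed through $\mu$, the hypothesis $\rot(g)=0$ forces $\fix(g)\neq\varnothing$ (otherwise a lift $G\in\homeo_{\ZZ}(\RR)$ would satisfy $G(x)>x$ for all $x$, or $G(x)<x$ for all $x$, yielding a nonzero rotation number). I would then argue by contradiction, assuming there is $p\in\supp(\mu)$ with $g(p)\neq p$.

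Since $\fix(g)$ is a nonempty closed proper subset of $\T$, the point $p$ lies in some connected component $J=(a,b)$ of $\T\setminus\fix(g)$, on which $g$ has no fixed point. Replacing $g$ by $g^{-1}$ if necessary (which lies in the same kernel and has the same fixed-point set), I may assume that $g$ is positive on $J$, i.e.\ $g(x)\in(x,b)$ for every $x\in J$; in particular the positively oriented arc $[p,g(p))$ is a proper sub-arc contained in $J$. Because this arc is proper, the identity $\mu[p,g(p))=\rot(g)=0$ (a priori only modulo $1$) gives $\mu[p,g(p))=0$ exactly. By $\mu$-invariance, each iterate $[g^n(p),g^{n+1}(p))$ also has measure zero. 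As $g$ has no interior fixed point on $J$, the orbit endpoints $g^n(p)$ increase monotonically to $b$ as $n\to+\infty$ and decrease to $a$ as $n\to-\infty$, so these half-open arcs tile $J$; summing the countably many zero measures gives $\mu(J)=0$. This contradicts $p\in J\cap\supp(\mu)$, since $J$ is an open neighborhood of $p$ and every neighborhood of a support point has positive measure. Hence $g(p)=p$, and $\supp(\mu)\subseteq\fix(g)$.

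The routine content (invariance of the support, the equivariance bookkeeping) is standard; the step I expect to require the most care is the last one, namely justifying that the arcs $[g^n(p),g^{n+1}(p))$ genuinely exhaust $J$. This rests on the classical dynamics of a circle homeomorphism with no fixed points on an interval—every orbit is pushed monotonically toward the boundary—so that the countable additivity argument applies cleanly. The only other subtlety is the orientation of the arc realizing $\rot(g)$, which is precisely why I would first normalize $g$ to be positive on $J$ by passing to $g^{-1}$ when needed, ensuring the relevant arc is a proper sub-arc of $J$ rather than its complement.
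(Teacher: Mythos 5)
Your argument is correct, but it is a substantially longer route than the one the paper takes. The paper's proof is a single line: for $x\in\supp(\mu)$ with $g(x)\neq x$, the arc $[x,g(x))$ must have positive $\mu$-measure, hence $\rot(g)=\mu[x,g(x))\neq 0$; there is no need to invoke the Poincar\'e rotation number, the structure of $\fix(g)$, or any orbit dynamics. Your contrapositive argument --- decomposing the component $J$ of $\T\setminus\fix(g)$ containing $p$ into the fundamental domains $[g^n(p),g^{n+1}(p))$, each of measure zero by invariance, and concluding $\mu(J)=0$ --- is in effect a fully detailed justification of the one assertion the paper leaves implicit, so what your version buys is self-containedness at the cost of length. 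Two small points: the step ``because this arc is proper, $\mu[p,g(p))=0$ exactly'' is not justified by properness alone (a priori the value could be $1$ rather than $0$); the correct reason is that $[p,g(p))$ and its disjoint translate $[g(p),g^2(p))$ have equal measure, which forces that common value to be at most $1/2$, hence $0$. Also, your reduction to the case $\fix(g)\neq\varnothing$ quietly uses that the measure-theoretic rotation number $\mu[x,g(x))$ agrees with the Poincar\'e rotation number; this is standard but worth citing rather than sketching, and in fact your tiling argument makes this detour unnecessary --- if $g$ had no fixed point you could tile all of $\T$ by the arcs $[g^n(p),g^{n+1}(p))$ and reach the same contradiction.
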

\begin{proof}
	Take $x\in \supp(\mu)$. If $g\in G$ is such that $g(x)\ne x$, then $\mu[x,g(x))\neq 0$ and thus $\rot(g)\neq 0$. 
\end{proof}

Let us give a straightforward application in the case when the elementary subgroup has at most $N$ fixed points.

\begin{lemma}\label{l.atomless_measure}
	Let $G< \homeo_+(\T)$ be a subgroup with at most $N$ fixed points, preserving a Borel probability measure $\mu$ on $\T$, whose support contains at least $N+1$ points. Then, the homomorphism $\rho_G:G\to \SO(2)$ is injective, and $G$ is continuously semi-conjugate to $\rho_G(G)$.
	
	In particular, $G$ is isomorphic to a subgroup of $\mathrm{SO}(2)$, and thus abelian. 
\end{lemma}

\begin{proof}
	If there was an element in the kernel, by Lemma \ref{l.kernel_fixes_measure}, it would fix the support of $\mu$, and thus at least $N+1$ points. As we are assuming that $G$ acts with at most $N$ fixed points, this gives that the kernel is trivial. Moreover, if the action of $G$ has a finite orbit, then the image $\rho_G(G)$ is finite, and so is $G$. In this case, we have that $G$ is conjugate to $\rho_G(G)$ (see e.g. Herman \cite[\S II.6]{Herman}). If the action of $G$ has not finite orbit, then by \cite[Proposition 6.17]{ghys-circle} we get that $G$ is continuously semi-conjugate to $\rho_G(G)$.
\end{proof}

\subsection{Möbius-like elementary groups}

Here we discuss the first result, namely Theorem \ref{tA.elementary_mobius-Like}, which states that elementary, M\"obius-like  subgroups of $\homeo_+(\T)$ are continuously semi-conjugate to subgroups of $\psl(2,\RR)$.

\begin{proof}[Proof of Theorem \ref{tA.elementary_mobius-Like}]
	Let $\nu$ be a Borel probability measure preserved by the action of $G$ on the circle. If $\supp(\nu)$ contains at least 3 points, then Lemma  \ref{l.atomless_measure} gives that $G$ is continuously semi-conjugate to the subgroup of rotations $\rho_G(G)\le \SO(2)< \psl(2,\RR)$, and actually isomorphic.
	
	Thus, from now on, we can assume that the action of $G$ has a finite orbit, which is either a fixed point or a pair of points. 	
	Assume first that there is a unique global fixed point for $G$. Then, by Solodov's theorem (Theorem \ref{t.solodov}), $G$ is continuously semi-conjugate to a subgroup of affine transformations, and the corresponding morphism $G\to \aff_+(\RR)<\psl(2,\RR)$ is injective.
	Assume next that $G$ has two global fixed points $p,q\in \T$.
 	After H\"older's theorem (Theorem \ref{t.solodov}), the restrictions of the actions of $G$ to the two connected components of $\T\setminus \{p,q\}$ are both continuously semi-conjugate to actions by translations, and moreover the corresponding induced homomorphisms $G\to \RR$ and $G\to \RR$ are injective. Therefore, after a continuous global semi-conjugacy (sending $p$ and $q$ to the points $0$ and $\infty$ of $\RR \mathrm P^1=\RR\cup \{\infty\}$), we can identify $G$ with a M\"obius-like subgroup $\Gamma$ of the group $\Lambda\cong \RR^2$ of homeomorphisms $f:\RR \mathrm P^1\to \RR \mathrm P^1$ for which there exist $\alpha=\alpha(f),\beta=\beta(f)\in \RR$ such that
 	\[f(x)=\left\{\begin{array}{lr} e^\alpha x & \text{ for } x\in [0,\infty] , \\
 		e^\beta x & \text{ for } x\in [\infty, 0 ]. \end{array} \right . \]
 	From now on, we will work with $\Gamma$ instead of $G$. Take a non-trivial element $f\in \Gamma$, and replacing $f$ by $f^{-1}$ if necessary, we assume $\alpha(f),\beta(f)>0$.
 	Up to replace $\Gamma$ by the subgroup of $\Lambda$ obtained by conjugating $\Gamma$ by the homeomorphism
 	\[
 	h(x)=\left\{\begin{array}{lr} x^{1/\alpha(f)} & \text{ for } x\in [0,\infty] , \\
 		-(-x)^{1/\beta(f)} & \text{ for } x\in [\infty, 0 ], \end{array} \right . \]
	we can assume that $\alpha(f)=\beta(f)=1$.
	
	We claim that also for every other $g\in \Gamma$, we have $\alpha(g)=\beta(g)$.
	Indeed, if for some $g\in \Gamma$, $\alpha=\alpha(g)$ and $\beta=\beta(g)$ satisfy $\alpha-\beta\neq0$, then there exists an integer $N\in \ZZ$ such that $N(\alpha-\beta) >1$ and so, there exists a second integer $M\in\ZZ$ such that
	$N \alpha > M > N \beta$.
	Therefore, $N \alpha - M > 0$ and $N \beta-M < 0$ and so for the element $\gamma=g^Nf^{-M}\in \Gamma$ one has $\alpha(\gamma)>0$ and $\beta(\gamma)<0$. This contradicts the assumption that $\Gamma$ is M\"obius-like. Now, since the subgroup of elements $g\in \Lambda$ with $\alpha(g)=\beta(g)$ is a subgroup of $\psl(2,\RR)$ (it is the stabilizer of the points $0$ and $\infty$), we conclude that $\Gamma$ is in $\psl(2,\RR)$.

	For the last case, we will assume that $G$ has a finite orbit of order $2$, and denote by $\nu$ the corresponding invariant probability measure. After Lemma \ref{l.kernel_fixes_measure}, we have a short exact sequence
	\begin{equation*}\label{e.exact_sequence_theorem_2.18}
		1\rightarrow G_0 \rightarrow G \rightarrow \ZZ_2 \rightarrow 1,
	\end{equation*}
	where we write $G_0=\ker(\rho_G)$. By the previous case (two global fixed points), $G_0$ is isomorphic and continuously semi-conjugate to a subgroup of the stabilizer of $0$ and $\infty$ in $\psl(2,\RR)$, which is isomorphic to $\RR$. By continuity of the rotation number, the closure $\overline G$ in $\homeo_+(\T)$ also fits a short exact sequence
	\begin{equation*}\label{e.exact_sequence_theorem_2.18}
		1\rightarrow \overline{G_0} \rightarrow \overline{G} \rightarrow \ZZ_2 \rightarrow 1,
	\end{equation*}
	and acts on the circle fixing the same two fixed points as $G_0$. In particular, it is left-orderable, and actually	order-isomorphic to either $\ZZ$ or $\RR$, with the standard order (or the reflected one).
	Let $a\in G$ be an element of the group with $\rot(a)= 1/2$.
	Observe that if $a^2\neq \idid$ then $a^2$ fixes the $2$ atoms of $\nu$ and no further point, so they are both parabolic fixed points. This contradicts the M\"obius-like assumption. Thus, we must have $a^2=\idid$. Therefore the exact sequence splits, so that $\overline{G}\cong \overline{G_0} \rtimes_A \ZZ_2$, where $A$ is the involution of $\mathrm{Aut}(\overline{G_0})$ defined by the conjugacy by $a$ (clearly this also gives the isomorphism
		$G\cong G_0 \rtimes_A \ZZ_2$, where we keep denoting by $A$ the restriction of the involution to $G_0$). We denote by $-\idid\in \mathrm{Aut}(\overline {G_0})$ the involution that sends any element to its inverse.
			
			\begin{claim}\label{claim:elementary}
					We have $A=-\mathrm{id}$, thus $G\cong G_0  \rtimes_{-\mathrm{id}} \ZZ_2$.
				\end{claim}
			
			\setcounter{claim}{0}
			
			\begin{proof}[Proof of claim]
		
		Note that $A$ acts as an order-automorphism of order 2 on $\overline{G_0}$, which is isomorphic to either $\ZZ$ or $\RR$. Hence it must be $A\in \{\idid,-\idid\}$. However, if $A=\idid$, then any non-trivial element of $G_0$ is centralized by $a$, and therefore its two fixed points should have the same dynamical nature (both parabolic), contradicting the Möbius-like assumption. We conclude that $aga^{-1}=g^{-1}$ for any element of $\overline{G_0}$.
	\end{proof}
	
	Let us now see that $G$ is continuously semi-conjugate to a subgroup of $\psl(2,\RR)$. Let $p,q=a(p)\in \T$ be the finite orbit, and take a $G_0$-invariant atomless Radon measure $\mu$ on $(p,q)$ (coming from H\"older's theorem). When $y<x$, we will use the convention $\mu[x,y)=-\mu(y,x]$. With such notation, we have that for any $f\in G_0$, there exists a constant $\alpha(f)\in \RR$ such that $\mu[x_0,f(x))=\mu[x_0,x)+\alpha(f)$ for any choice of $x_0,x\in (p,q)$. The function $\alpha:G_0\to \RR$ is an injective homomorphism, and it actually corresponds, up to a scalar multiple, to the $\alpha(f)$ previously considered.
	Now, fix a point $x_0\in (p,q)$, and consider the function $h:\T\to \RR \mathrm P^1$ defined by
	\[
	h(x)=\left\{
	\begin{array}{lr}
		0 & \text{if }x=p,\\
		e^{\mu[x_0,x)} & \text{if }x\in (p,q),\\
		\infty & \text{if }x=q,\\
		-e^{-\mu[x_0,a(x))} & \text{if }x\in (q,p).
	\end{array}
	\right.
	\]
	The function $h$ is continuous and of degree 1. Let us check that it defines a semi-conjugacy from $G$ to the subgroup of $\psl(2,\RR)$, generated by the maps
	\[
	A_f(t)=e^{\alpha(f)}t \quad\text{(for $f\in G_0$)},
	\]
	and $I(t)=-1/t$. Indeed, if $x\in (p,q)$, then we have
	\[
	h(a(x))=-e^{-\mu[x_0,a(a(x)))}=-1/e^{\mu[x_0,x)}=-1/h(x)=I(h(x)),
	\]
	and for $f\in G_0$,
	\[
	h(f(x))=e^{\mu[x_0,f(x))}=e^{\mu[x_0,x)+\alpha(f)}=e^{\alpha(f)}h(x)=A_f(h(x)).
	\]
	When $x\in (q,p)$, then we have
	\[
	h(a(x))=-e^{\mu[x_0,a(x))}=-1/(-e^{-\mu[x_0,x)})=-1/h(x)=I(h(x)),
	\]
	and for $f\in G_0$,
	\[
	h(f(x))=-e^{-\mu[x_0,af(x))}=-e^{-\mu[x_0,f^{-1}a(x))}=
	-e^{-\mu[x_0,a(x))-\alpha(f^{-1})}=
	e^{\alpha(f)}h(x)=A_f(h(x)).
	\]
	This concludes the proof.
\end{proof}

\subsection{A group of smooth diffeomorphisms which is not isomorphic to any M\"obius group}

Here we give an example of group of $C^\infty$ circle diffeomorphisms, with at most 2 fixed points, but which is not isomorphic to any subgroup of  $\mathrm{PSL}(2,\mathbb{R})$.

\begin{proof}[Proof of Theorem \ref{t.smooth_example_non_isomorphic}] For the construction of this example we will consider the following maps with respect to the projective coordinates of the circle: fix $\lambda,\mu>1$ such that $\log \lambda$ and $\log\mu$ are linearly independent over $\QQ$, and set 
	$$f(x)=\left\{\begin{array}{lr} \lambda x & \text{ for } x\in [0,\infty] , \\
		\mu x & \text{ for } x\in [\infty, 0 ]. \end{array} \right . $$
	For convenience, set $g = R_{\frac{1}{2}} f R_{\frac{1}{2}}$. It is clear that $f$ and $g$ generate a rank $2$ abelian free group. Moreover, conjugation by the rotation $R_{\frac{1}{2}}$ defines an action of $\ZZ_2$ on such $\ZZ^2$ given by the matrix 
	$$A = \begin{pmatrix}
		0 & 1 \\
		1 & 0 
	\end{pmatrix} 
	$$
	(with respect to the basis $f, g$). In other terms, the group $G= \langle f, R_{\frac{1}{2}} \rangle $ is isomorphic to the semi-direct product $\ZZ^2 \rtimes_{A} \ZZ_2$. By construction, one can observe that $G$ acts with at most $2$ fixed points. 
	
	\begin{claim*}
		The group $G$ is not isomorphic to any subgroup of $\psl(2,\RR)$.
	\end{claim*}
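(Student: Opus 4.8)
The plan is to argue by contradiction at the purely abstract level: the claim only concerns the isomorphism type of $G\simeq \ZZ^2\rtimes_A\ZZ_2$, so I would suppose there is an injective homomorphism $\phi\colon G\to \psl(2,\RR)$ and derive a contradiction from the structure of $\psl(2,\RR)$. Set $u=\phi(f)$, $v=\phi(g)$ and $a=\phi(R_{\frac12})$. Since $\phi$ is injective, $a$ is a non-trivial involution ($a^2=\idid$, $a\neq\idid$), the elements $u,v$ commute, have infinite order, and generate a copy of $\ZZ^2$; in particular $v\notin\langle u\rangle$. The defining relation $R_{\frac12}fR_{\frac12}=g$ becomes $a\,u\,a^{-1}=v$.

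The heart of the argument is the classical description of commuting elements of $\psl(2,\RR)$: two commuting elements of infinite order are simultaneously elliptic (with a common fixed point in the disc $\mathbb{D}$), parabolic (with a common fixed point on $\partial\mathbb{D}=\T$), or hyperbolic (with a common pair of fixed points on $\partial\mathbb{D}$), and in each case the centralizer $C(u)$ is a one-parameter subgroup $T$ — respectively a conjugate of $\SO(2)$, of the unipotent subgroup, or of the diagonal (Cartan) subgroup. I would first establish (or cite) this, deducing that $v\in C(u)=T$, and likewise $C(v)=T$ since $v$ is a non-trivial element of the same one-parameter subgroup. Using that centralizers are equivariant under conjugation, $aTa^{-1}=a\,C(u)\,a^{-1}=C(aua^{-1})=C(v)=T$, so $a$ normalizes $T$ and induces an automorphism $\sigma_a$ of $T$ with $\sigma_a(u)=v$ and $\sigma_a^2=\sigma_{a^2}=\idid$.

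I would then run the case analysis on $N(T)/T$. For the elliptic case $T=\SO(2)$ is self-normalizing (its normalizer must fix the center of $T$ in $\mathbb{D}$), so $a\in T$ and $\sigma_a=\idid$. For the parabolic case $N(T)$ is the Borel $\cong\aff_+(\RR)$ and conjugation acts on $T\cong\RR$ by strictly positive scalings; since $\sigma_a$ is an involution, the scaling factor squares to $1$ and is positive, hence equals $1$, so again $\sigma_a=\idid$. For the hyperbolic case $N(T)/T\simeq\ZZ_2$ acts by inversion, so $\sigma_a\in\{\idid,\ \mathrm{inv}\}$. In every case $v=\sigma_a(u)\in\{u,u^{-1}\}\subset\langle u\rangle$, contradicting $v\notin\langle u\rangle$, which finishes the proof.

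The computations are all standard structure theory of $\psl(2,\RR)$, so I expect no deep obstacle; the only points requiring care are the two normalizer facts — that $\SO(2)$ is self-normalizing, and that the Weyl-type quotient acts on the parabolic/hyperbolic torus only by positive scalings (respectively by inversion). The conceptual crux, and the one I would highlight, is recognizing that the swap automorphism $A$ cannot be realized inside $N(T)/T$: the $\ZZ_2$-action induced by conjugation on a one-parameter subgroup of $\psl(2,\RR)$ can only be trivial or inversion, never a genuine rank-$2$ swap, which is exactly what forces $v$ back into $\langle u\rangle$.
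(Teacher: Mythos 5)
Your argument is correct, but it follows a genuinely different route from the paper's. The paper stays inside the dynamical framework it has just built: assuming $G\simeq H\le\psl(2,\RR)$, it notes that $H$ is virtually abelian, hence preserves a Borel probability measure and is elementary and M\"obius-like, and then runs through the trichotomy from the proof of Theorem~\ref{tA.elementary_mobius-Like} (an atomless invariant measure forces $H$ abelian by Lemma~\ref{l.atomless_measure}; global fixed points force $H$ into $\aff_+(\RR)$, hence torsion-free, by Solodov; an order-$2$ orbit forces, by Claim~\ref{claim:elementary}, the involution to act on the abelian part by $-\mathrm{id}$), concluding because $\ZZ^2\rtimes_A\ZZ_2\not\simeq\ZZ^2\rtimes_{-\mathrm{id}}\ZZ_2$. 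You instead work purely with the algebraic structure of $\psl(2,\RR)$: the centralizer of an infinite-order element is a one-parameter subgroup $T$ (compact, unipotent, or Cartan), the normalizer of $T$ acts on it only trivially or by inversion, and therefore the conjugating involution sends $u$ into $\{u,u^{-1}\}\subset\langle u\rangle$, contradicting that $\phi(f),\phi(g)$ generate a rank-$2$ group. Both proofs bottom out at the same crux --- an involution of $\psl(2,\RR)$ can act on a rank-$2$ abelian subgroup only by $\pm\mathrm{id}$, never by the swap --- but your version is self-contained (it needs neither Theorem~\ref{tA.elementary_mobius-Like} nor Solodov's theorem) and even avoids distinguishing the two semidirect products, while the paper's version is essentially free given the machinery already in place and illustrates how Theorem~\ref{tA.elementary_mobius-Like} constrains elementary M\"obius-like groups. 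The standard facts you invoke (self-normalization of $\SO(2)$, $N(T)\cong\aff_+(\RR)$ acting by positive scalings in the parabolic case, $N(T)/T\cong\ZZ_2$ acting by inversion in the hyperbolic case) are all correct; in the parabolic case you could even short-circuit the scaling computation by noting that $a$ would lie in the torsion-free group $\aff_+(\RR)$.
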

	\begin{proof}[Proof of claim]
	By contradiction, let us assume $G$ is isomorphic to some subgroup $H\le \psl(2,\RR)$.
	Then $H\cong \ZZ^2 \rtimes_{A} \ZZ_2$ is virtually abelian, and therefore its action on $\T$ preserves a Borel probability measure. We conclude that $H$ is elementary and M\"obius-like. In particular, it satisfies the assumptions of Theorem \ref{tA.elementary_mobius-Like}. Let us go through the cases discussed in its proof. If $H$ preserves an atomless Borel probability measure, that it must be abelian by Lemma \ref{l.atomless_measure}, but this is a contradiction. In the case $H$ has fixed points, then by Solodov's theorem, $H$ is isomorphic to a subgroup of $\aff_+(\RR)$, and therefore torsion free. This is again a contradiction. Therefore we are left with the case of order-2 orbit, so by Claim \ref{claim:elementary}, we have 
		\begin{equation}\label{eq:semid1}
		H\cong \ZZ^2\rtimes_{-\mathrm{id}}\ZZ_2=\langle f,g,a\mid [f,g]=\idid,\,a^2=\idid,\,afa^{-1}=f^{-1},\,aga^{-1}=g^{-1}\rangle.
		\end{equation}
	However, this contradicts the fact that
		\begin{equation}\label{eq:semid2}
		H\cong\ZZ^2\rtimes_{A}\ZZ_2=\langle f,g,a\mid [f,g]=\idid,\,a^2=\idid,\,afa^{-1}=g\rangle,
		\end{equation}
	because the the semi-direct products at lines \eqref{eq:semid1} and \eqref{eq:semid2} have different abelianizations (as one easily checks from the presentations: \eqref{eq:semid1} gives $(\ZZ_2)^3$, while \eqref{eq:semid2} gives $\ZZ\times \ZZ_2$). This ends the proof of the claim.
\end{proof}
	
	Finally, conjugating $G$ by a suitable $C^{\infty}$ homeomorphism which is infinitely flat at $0$ and $\infty$, we can embed $G$ into $\mathrm{Diff}^{\infty}(\T)$.
	This proves Theorem~\ref{t.smooth_example_non_isomorphic}.
\end{proof}

\begin{remark}\label{r.nao_isomorfo_minimal}
	An example of a \emph{minimal} finitely generated group of circle homeomorphisms, with at most 2 fixed points, and which is not isomorphic to any subgroup of $\mathrm{PSL}(2,\mathbb{R})$, can be built by taking a free product of the group $G$ from Theorem \ref{t.smooth_example_non_isomorphic} with itself, obtained by blowing-up a free orbit. This kind of construction is inspired by \kovacevic's work \cite{Ko2}, and it is detailed in the second author's PhD thesis: indeed, let $G_1$ and $G_2$ be two copies of the group $G$, considered as acting on distinct circles $\Gamma_1$ and $\Gamma_2$, and choose any two points $x\in \Gamma_1$ and $y\in \Gamma_2$ with trivial stabilizer in $G_1$ and $G_2$, respectively. Then, \cite[Theorem D]{theseJoao} gives that the \emph{amalgamated product} $(G_1,x)\star (G_2,y)$ (properly defined after \cite[Theorem 4.7]{theseJoao}; note that in this case it is simply a \emph{free} product) is a subgroup of $\homeo_+(\T)$ acting minimally, and with at most 2 fixed points.	
\end{remark}

\section{Non-locally discrete groups with at most $N$ fixed points}\label{s.non-discrete}
In this section we discuss Theorem \ref{mainthmA.non-discrete}, about non-locally discrete groups with at most $N$ fixed points. We start this section with an example that justifies our definition of non-local discreteness (Definition \ref{d.local_discrete}).

We first need a result that can be seen as a consequence of the ``Projective Baumslag Lemma'' \cite[Lemma 3.4]{Kim-Koberda-Mj}.
\begin{lemma}\label{lem:KKM}
	Let $A\le \psl(2,\RR)$ be a countable subgroup. Then, there exists a countable set $D\subset \SO(2)$ such that for every rotation $R_\rho\in \SO(2)\setminus D$, the subgroup $\langle A,R_\rho\rangle$ is isomorphic to the free product $A*\langle R_\rho\rangle$.
\end{lemma}
\begin{proof}
	The subgroup $\SO(2)$ is a maximal abelian subgroup of $\psl(2,\RR)$, acting without fixed points. Let
	\[w=(k,g_1,\ldots,g_k,m_1,\ldots,m_k)\]
	be a choice of an integer $k\ge 1$, non-trivial elements $g_1,\ldots,g_k \in A$, and finitely many integers $m_1,\ldots,m_k\in \ZZ\setminus \{0\}$ (so, there are countably many such choices).	
	By \cite[Lemma 3.4]{Kim-Koberda-Mj}, for any such $w$, there exists a finite subset $D_w\subset \SO(2)$ such that for any $R_\rho\in \SO(2)\setminus D_w$ the composition
	\[
	\phi(w)=g_1R_\rho^{m_1}\cdots g_k R_\rho^{m_k}
	\]
	is non-trivial (more precisely, the square of its trace is different from $4$).
	We can then take $D=\bigcup_{w}D_w$, which is a countable union of finite subsets, and hence countable.
\end{proof}

\begin{proposition}\label{l.non-elementary_loc_non-discrete}
	There exists a finitely generated non-elementary subgroup $G<\homeo_+(\T)$ with at most 2 fixed points, which is not M\"obius-like, and for which there exists a wandering interval $I\subset\T$ such that the image of $\stab(G,I)$ is non-discrete in $\homeo_+(I)$.
\end{proposition}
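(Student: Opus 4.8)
The plan is to realize the desired $G$ as a ping-pong free product of an abelian \emph{cusp} --- a rank-$2$ group of ``double parabolics'' fixing the two endpoints of an interval --- with a single hyperbolic element, arranged so that the interval between the two parabolic fixed points is a gap of the minimal set. All the non-discreteness will be concentrated on this gap, produced by the non-discrete action of the abelian cusp on it; since the cusp consists of elements with two parabolic fixed points, the group fails to be M\"obius-like (this is the basic non-M\"obius example from the introduction), and the interval is wandering precisely because it is a gap.

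Concretely, I would fix $P,Q\in\T$, write $I=(P,Q)$ and $J=(Q,P)$ for the two complementary arcs, and take a vector field $X=\rho\,\partial_x$ with $\rho\ge 0$ vanishing exactly at $P,Q$ and to order $\ge 2$ there. Its flow $\{\phi^\tau\}$ then fixes $\{P,Q\}$ with $P,Q$ parabolic (each $\phi^\tau$ is tangent to the identity at $P,Q$ and positive elsewhere). Choosing $s,t>0$ with $s/t\notin\QQ$ and setting $a_1=\phi^s$, $a_2=\phi^t$, the group $C=\langle a_1,a_2\rangle\cong\ZZ^2$ acts on $\overline I$ with translation numbers $\{ns+mt\}$ dense in $\RR$, and every nontrivial element of $C$ is a double parabolic fixing exactly $\{P,Q\}$. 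I then pick a hyperbolic (North--South) element $b$ with both fixed points in the open arc $J$, chosen strongly contracting and in ping-pong position so that $b^{\pm1}(\overline I)\subset J$ and $b^{\pm 1}(\overline I)\cap\overline I=\varnothing$, and set $G=\langle a_1,a_2,b\rangle$.

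The verification then splits into three routine steps and one hard step. A ping-pong argument identifies $G$ with the free product $\ZZ^2\star\ZZ$, shows it is non-elementary (and preserves no probability measure), and shows that its minimal set $\Lambda$ is a Cantor set contained in $\overline J$ with $I$ a single gap; in particular $\stab_G(I)=C$ and $I$ is wandering. The group is not M\"obius-like because $a_1$ has two parabolic fixed points. Finally $G$ is non-discrete on the wandering interval $I$: taking $(n_k,m_k)$ with $n_ks+m_kt\to 0$ but $\ne 0$, the elements $a_1^{n_k}a_2^{m_k}=\phi^{n_ks+m_kt}$ are nontrivial, and by continuity of the flow on the compact arc $\overline I$ they satisfy $a_1^{n_k}a_2^{m_k}|_I\to\idid|_I$ in $C^0(I;\T)$, which is exactly the non-discreteness of Definition \ref{d.local_discrete}.

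The main obstacle is the remaining claim, that every nontrivial element of $G$ has at most $2$ fixed points. The dynamical skeleton is given by ping-pong (a cyclically reduced word not conjugate into a factor has North--South dynamics, hence exactly two fixed points, while elements conjugate into $C$ or into $\langle b\rangle$ fix exactly two points), but turning this into a genuine bound requires a careful crossing analysis in the spirit of \S\ref{s.preliminaries}: the delicate point is that elements of $C$ are tangent to the identity at $P,Q$, so one must rule out that composing them with powers of $b$ produces extra crossings and hence additional fixed points. I expect this to follow from the combination theorem for groups with at most $2$ fixed points from the second author's thesis (\cite[Theorem 4.7]{theseJoao}, cf.\ Remark \ref{r.nao_isomorfo_minimal}), applied here so as to leave the arc $I$ unfilled --- rather than blowing up a free orbit to force minimality --- and I would regard checking its hypotheses in this non-minimal configuration as the technical heart of the proof.
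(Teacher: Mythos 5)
Your architecture (a rank-$2$ abelian group of ``double parabolics'' with dense translation numbers on an interval that is to become a gap of the minimal set, combined with a hyperbolic element) matches the paper's, but the way you realize the cusp makes the construction unworkable, not merely under-justified. Because $a_1,a_2$ are time-$s$ and time-$t$ maps of a single flow on $\T$, the elements $a_1^{n_k}a_2^{m_k}=\phi^{n_ks+m_kt}$ with $n_ks+m_kt\to 0$ converge to the identity uniformly on the \emph{whole circle}, not just on $I$; and a globally non-discrete, non-elementary group with at most $2$ fixed points cannot have a wandering interval. Concretely: if $G$ were non-elementary with minimal set $\Lambda$ and $I\cap\Lambda=\varnothing$, then since each non-trivial $\phi^\tau\in C$ preserves $I$ and fixes only $P,Q$, the gap of $\Lambda$ containing $I$ must be $I$ itself, so $b(I)$ is another gap, with endpoints $b(P),b(Q)\in J$. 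Choosing $\tau=n_ks+m_kt\neq 0$ with $d_\infty(\phi^\tau,\idid)<\tfrac12|b(I)|$, the gap $\phi^\tau(b(I))$ meets $b(I)$, hence equals it, so $\phi^\tau$ fixes $b(P)$ and $b(Q)$ in addition to $P$ and $Q$ --- contradicting $\fix(\phi^\tau)=\{P,Q\}$. So at least one of ``$I$ is wandering'', ``at most $2$ fixed points'', ``non-elementary'' must fail; this is exactly the mechanism behind Lemma \ref{l.non_loc_discrete_entao_non_discrete}. Your ping-pong claim founders on the same point: a factor containing elements arbitrarily close to the identity cannot satisfy the ping-pong inclusions, and the combination theorem you invoke amalgamates along points with \emph{trivial} stabilizer, whereas your $I$ must have stabilizer $\ZZ^2$.

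The paper's proof is built precisely to decouple the two roles of the cusp. It first constructs $F=\langle T_\alpha,T_\beta,R_\rho\rangle\le\psl(2,\RR)$, where $T=\langle T_\alpha,T_\beta\rangle\simeq\ZZ^2$ is parabolic fixing a single point $p$, and gets $F\simeq\ZZ^2\ast\ZZ$ with $\stab_F(p)=T$ not by ping-pong but by the genericity lemma \cite[Lemma 3.4]{Kim-Koberda-Mj}. Only then does it blow up the $F$-orbit of $p$, inserting an interval $I$ at $p$ on which $T$ acts by a minimal translation action whose translation numbers are chosen \emph{independently} of those of $T$ in $\psl(2,\RR)$. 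An element of $T$ acting on $I$ by a tiny translation is then a large parabolic on the complement (it displaces every other inserted gap by at least that gap's length), so the blown-up group is globally and locally discrete while its restriction to the wandering gap $I$ is non-discrete; the fixed-point count is inherited from the M\"obius model, and the non-M\"obius elements come from matching the translation direction on $I$ with the parabolic direction outside. To salvage a direct construction you would have to make the cusp's action on $I$ independent of its action on $J$ in exactly this way, at which point you have reproduced the paper's argument.
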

\begin{proof}
	First, we will construct a non-elementary subgroup of $\psl(2,\RR)$ with the stabilizer of a point $p\in\T$ being parabolic and isomorphic to $\ZZ^2$. 
	For this, let $T_\alpha, T_\beta$ be two parabolic elements of $\psl(2,\RR)$ fixing the same point $p\in\T$, such that the subgroup $T=\langle T_\alpha, T_\beta\rangle$ is free abelian of rank 2. After Lemma \ref{lem:KKM}, we can find an irrational rotation $R_\rho\in \SO(2)$ such that 	$F=\langle T,R_\rho\rangle$ is isomorphic to the free product $T*\langle R_\rho\rangle\cong \ZZ^2*\ZZ$. Observe that the stabilizer of the point $p$ has not changed, that is $\stab(F,p) = \stab(T,p) = T$. Indeed, for every element $g\in\psl(2,\RR)$ that fixes the point $p$, we have that the parabolic element $gT_\alpha g^{-1}$ fixes $p$ and so it commutes with $T_\alpha$; so if $\stab(F,p) \neq T$, we could then find an element $g\in F\setminus T$ such that $[gT_\alpha g^{-1},T_\alpha]=\idid$. This is not possible after our choice of $R_\rho$.
	Finally, note that $F$ is a non-elementary subgroup of $\psl(2,\RR)$ with a parabolic stabilizer $T$ of the point $p$ isomorphic to $\ZZ^2$.
	
	For the second step of the construction, we \emph{blow-up} the action of $F$ at the orbit of $p$ (see for instance Kim and Koberda \cite{Kim-Koberda-Denjoy}), in such a way that the action of the stabilizer $T$ on the interval $I$ inserted at $p$ is conjugate to the original action of $T$ on $\T\setminus \{p\}\cong \RR$ (which is a minimal action by translations). Choosing the good orientation for this $T$-action on $I$, we have that any non-trivial element of $T$ acts on the new circle with 2 parabolic fixed points. We call $G$ the resulting subgroup of $\homeo_+(\T)$, which is abstractly isomorphic to $F\cong \ZZ^2*\ZZ$. 
	It is not difficult to verify that any non-trivial element of $G$ has at most $2$ fixed points (we do not have changed the action on the complement of the $F$-orbit of $p$, and conjugates of $T$ are now acting with at most 2 fixed points).
	Therefore, $G$ is a non-elementary group of circle homeomorphisms, with at most $2$ fixed points. The subgroup $\stab(G,I)\cong T$ acts on $I$ minimally by translations, and thus its image in $\homeo_+(I)$ is non-discrete.
\end{proof}

It turns out that non-discreteness and non-local discreteness are equivalent for non-elementary subgroups. To see this, we need a fundamental structural result for non-elementary subgroups. To state it, we say that the action of a subgroup $G\le \homeo_+(\T)$ is \emph{proximal} if for every non-empty open intervals $I,J\subset \T$, there exists an element $g\in G$ such that $g(I)\subset J$. Note that if the action is proximal, then it is automatically minimal, and $G$ non-elementary.
Being proximal is not invariant under semi-conjugacy, since if $J$ is a wandering interval and $I$ is not, there is no way we can send $I$ inside $J$. For this reason,
we also say that the action of a non-elementary subgroup $G$ is proximal \emph{in restriction to the minimal invariant subset} if the previous statement holds only for intervals $J\subset \T$ which are non-wandering (that is, intersecting the minimal invariant subset). This property is preserved under semi-conjugacy, and when the action is minimal, we recover the usual notion of proximal action. In particular, any non-elementary subgroup $G$ which is proximal in restriction to the minimal invariant subset is (continuously) semi-conjugate to a proximal action. As a consequence, if $G$ is proximal in restriction to the minimal invariant subset, then it is automatically non-elementary.
The following fundamental result can be deduced from the work of Antonov \cite{Antonov}, although it has been unknown to experts for a long time (see e.g.\ Ghys \cite[\S 5.2]{ghys-circle}). Our statement is very close to that appearing in the work of Malyutin \cite[Theorem 1]{Malyutin}.

\begin{theorem}[Antonov]\label{t.antonov}
	Let $G\le \homeo_+(\T)$ be a non-elementary subgroup. Then there exists a finite order element $\gamma\in \homeo_+(\T)$ which commutes with every $g\in G$, and such that the induced action of $G$ on the quotient $\T/\langle \gamma\rangle$ is proximal in restriction to the minimal invariant subset.
\end{theorem}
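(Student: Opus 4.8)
The plan is to reduce the statement to the case where the action of $G$ is minimal on all of $\T$, and then to realise $\gamma$ as a generator of the centralizer $Z=\{z\in\homeo_+(\T): zg=gz\text{ for all }g\in G\}$ of $G$. First I would collapse the gaps of the minimal invariant set $\Lambda$: contracting each connected component of $\T\setminus\Lambda$ to a point yields a minimal action of $G$ on a circle, and it suffices to produce $\gamma$ for this minimal action and then transport it back (handled in the last paragraph). So from now on assume $\Lambda=\T$, i.e.\ the action is minimal. The key elementary observation about $Z$ is that every non-trivial $z\in Z$ acts freely: if $z(x)=x$ for some $x$, then $z(g(x))=g(z(x))=g(x)$ for all $g\in G$, so $z$ fixes the dense orbit $Gx$ and hence $z=\idid$.

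The first main step (P1) is to show that $Z$ is finite cyclic. The same argument applied to the closure $\overline Z$ shows that every non-trivial element of $\overline Z$ acts freely (a $C^0$-limit of free elements either is free or fixes a point, and a point-fixing element of $\overline Z$ still commutes with $G$, hence is the identity). By Hölder's theorem, a group acting freely on the circle is abelian and semi-conjugate to a group of rotations, and the rotation number furnishes an injective homomorphism $\rot\colon \overline Z\to \T$ (its kernel has rotation number $0$, hence fixed points by Poincaré, hence is trivial by freeness). One concludes that $\overline Z$ is topologically conjugate to a closed subgroup of $\SO(2)$, which is either finite cyclic or all of $\SO(2)$. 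The latter is impossible, for then $G$ would centralise a full circle of rotations and would itself be conjugate to a group of rotations, contradicting non-elementarity. Hence $Z=\overline Z$ is finite cyclic; let $\gamma$ be a generator, of order $k\ge 1$.

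The second main step (P2) is that the induced action is proximal. Put $\overline\T=\T/\langle\gamma\rangle$, again a circle, on which $G$ acts minimally with trivial centralizer (a non-trivial central symmetry downstairs would lift to an element of $Z\setminus\langle\gamma\rangle$). It then remains to prove that a minimal, non-elementary action with trivial centralizer is proximal. The engine is that, having no invariant Borel probability measure, the action must \emph{contract} some interval: there exist an interval $I$ and elements $g_n\in G$ with $|g_n(I)|\to 0$, obtained from a Margulis-type argument on the fixed-point-free action of $G$ on the space of probability measures. One then studies the proximality relation ``$x\sim y$ iff $\inf_{g\in G}d(g(x),g(y))=0$'', showing it is a closed, $G$-invariant equivalence relation whose classes are finite and of constant cardinality (by minimality), yielding a finite-to-one $G$-equivariant factor onto a genuinely proximal circle action; the cyclic permutation of the fibres is realised by a finite-order homeomorphism commuting with $G$, hence lying in the centralizer. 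Since the centralizer is now trivial, the fibres are singletons and the action is already proximal.

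The delicate point — and the main obstacle — is precisely step (P2): extracting the contracting sequence from the absence of an invariant measure, and then verifying that the defect of proximality is exactly a finite cyclic cover whose deck group is the centralizer, i.e.\ that the proximality classes are genuinely finite, of constant size, and that the fibre-shift is a homeomorphism. This is where Antonov's original analysis must be carried out with care, ruling out infinite or non-constant fibres. Finally, to return from $\Lambda=\T$ to the original (possibly non-minimal) action: the symmetry $\gamma$ of the minimal set permutes the gaps of $\Lambda$ compatibly with $G$, and extending $\gamma$ across each gap by an affine identification with its image produces a finite-order $\gamma\in\homeo_+(\T)$ commuting with all of $G$; the induced action of $G$ on $\T/\langle\gamma\rangle$ is then proximal in restriction to its minimal invariant subset, as required.
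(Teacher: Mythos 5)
First, a point of comparison: the paper does not prove this statement. It is quoted as a classical result that ``can be deduced from the work of Antonov (see also Ghys)'', so there is no in-paper argument to measure your proposal against; what can be assessed is whether your sketch would constitute a proof. Your outline does follow the classical strategy (collapse the gaps of the exceptional minimal set, identify a finite cyclic group of symmetries commuting with $G$, pass to the quotient), and the elementary observations are correct: the centralizer $Z$ acts freely because each orbit is dense, and an infinite $Z$ is impossible --- though for that last point it is cleaner to say that an infinite free group of homeomorphisms has a unique invariant probability measure, which $G$ would then preserve by commutation, contradicting non-elementarity; your route via ``$\overline Z$ is conjugate to a closed subgroup of $\SO(2)$'' needs justification, since Hölder only gives a semi-conjugacy.

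As a proof, however, the proposal has genuine gaps, and you flag the main one yourself: essentially all of the content of Antonov's theorem is concentrated in your step (P2), where you invoke two black boxes of the same depth as the theorem itself --- the existence of contractible intervals in the absence of an invariant measure, and the fact that the non-proximality classes form a closed equivalence relation with finite fibres of constant cardinality whose fibre-shift is a homeomorphism. Moreover, the relation ``$x\sim y$ iff $\inf_{g\in G}d(g(x),g(y))=0$'' is not obviously transitive; the standard argument works instead with contractible intervals and the monotone map $\theta(x)=\sup\{y: [x,y] \text{ contractible}\}$, which directly produces the finite-order $\gamma$ and sidesteps your step (P1) entirely. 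Two further steps would fail as written. (a) Extending $\gamma$ across the gaps of $\Lambda$ ``by an affine identification'' does not yield a map commuting with $G$, since $G$ does not act affinely on the gaps; one must define $\gamma$ on the gaps equivariantly, one $\langle G,\gamma\rangle$-orbit of gaps at a time, checking compatibility with the (possibly non-trivial) stabilizers and with the relation $\gamma^k=\idid$. (b) The claim that a central symmetry $h$ of the quotient lifts to an element of $Z\setminus\langle\gamma\rangle$ is not immediate: a lift $\tilde h$ only satisfies $\tilde h g\tilde h^{-1}=\gamma^{c(g)}g$ for some homomorphism $c:G\to\ZZ_k$, so it need not centralize $G$. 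In short, the proposal is a correct roadmap of the classical proof, but the hard analytic core and the descent/lifting steps are not actually carried out.
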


	\begin{remark}\label{rem.simply}
		In the setting of Theorem \ref{t.antonov}, several dynamical properties are equivalent for the actions of $G$ on $\T$ and $\T/\langle \gamma\rangle$:
	\begin{itemize}
		\item (local) discreteness,
		\item minimality,
		\item all non-trivial elements have a uniformly bounded number of fixed points (because if $\gamma$ is of order $d$, and an element $g\in G$ acts on $\T/\langle \gamma\rangle$ with $k$ fixed points, then $g^d$ acts on $\T$ with $dk$ fixed points).
	\end{itemize}
	In particular, the last argument gives that if an element $g\in G$ acts on $\T/\langle \gamma\rangle$ without fixed points, then it also acts without fixed points on $\T$.
	
	We will use this correspondence several times in the rest of the section to work under the more convenient assumption that the action of $G$ on $\T$ is proximal in restriction to the minimal set.
\end{remark}

\begin{lemma}\label{l.non_loc_discrete_entao_non_discrete}\label{r.non-discrete_is_minimal}
	If $G\le\homeo_+(\T)$ is a non-elementary, non-locally discrete subgroup, then $G$ is non-discrete.
	
	Moreover, if $G$ has at most $N$ fixed points, its action on the circle is minimal.
\end{lemma}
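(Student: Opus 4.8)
The plan is to prove the two assertions in turn: the first using only non-local discreteness together with Antonov's theorem, and the second adding the fixed-point bound through a counting argument on the gaps of the minimal set. I would begin from the definition of non-local discreteness: there is a non-wandering interval $I\subset\T$ and a sequence of pairwise distinct $g_n\in G$ with $g_n|_I\to\idid$ in the $C^0$ topology. Passing to a slightly smaller closed subinterval $I'\subset I$ that still meets $\Lambda$ (possible since $\Lambda$ is infinite), a routine argument shows that not only $g_n|_{I'}\to\idid$ but also $g_n^{-1}|_{I'}\to\idid$ uniformly, so the convergence is genuinely of $d_{\mathcal{C}^0}$ type on $I'$. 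By Antonov's theorem (Theorem \ref{t.antonov}) there is a finite-order $\gamma$ commuting with $G$ such that the induced action of $G$ on $\T/\langle\gamma\rangle$ is proximal in restriction to its minimal set; taking $I'$ short enough that it embeds into $\T/\langle\gamma\rangle$, its image $\bar I'$ is non-wandering and $\bar g_n|_{\bar I'}\to\idid$.

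The heart of the first part is to spread this near-identity behaviour to all of $\T/\langle\gamma\rangle$. I would fix a small arc $U$ and set $J=(\T/\langle\gamma\rangle)\setminus U$. Proximality in restriction to the minimal set provides $\bar h$ with $\bar h(\overline{J})\subset\bar I'$, so the conjugates $\bar a_n:=\bar h^{-1}\bar g_n\bar h$ satisfy $\bar a_n|_J\to\idid$ and $\bar a_n^{-1}|_J\to\idid$ uniformly. Since an orientation-preserving homeomorphism that is uniformly close to the identity on the large arc $J$ must move the points of the short complementary arc $U$ by at most $|U|$ plus a small error, one concludes $\bar a_n\to\idid$ on the whole of $\T/\langle\gamma\rangle$ in $d_{\mathcal{C}^0}$. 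Lifting back, the genuine element $a_n=h^{-1}g_n h\in G$ (with $h$ a lift of $\bar h$) is $d_{\mathcal{C}^0}$-close to some deck transformation $\gamma^{j_n}$; passing to a subsequence with $j_n\equiv j$, the elements $a_m^{-1}a_n=h^{-1}g_m^{-1}g_n h\in G$ are non-trivial (for $g_m\ne g_n$) and converge to $\idid$ in $d_{\mathcal{C}^0}$. Hence $G$ accumulates at the identity and is non-discrete.

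For the moreover part, suppose the action were not minimal. As $G$ is non-elementary it has no finite orbit, so $\Lambda$ is then a Cantor set, whose complement consists of infinitely many gaps; writing $N(\delta)$ for the number of gaps of length at least $\delta$, one has $N(\delta)\to\infty$ as $\delta\to 0$. The previous step produces non-trivial $g\in G$ with $d_\infty(g,\idid)<\varepsilon$ for any prescribed $\varepsilon$. If $\varepsilon<\delta$, then for each gap $W=(a,b)$ of length at least $\delta$ the image $g(W)$ is again a gap, and its two endpoints lie within $\varepsilon$ of $a$ and $b$; since $W$ has length at least $\delta>\varepsilon$, no gap other than $W$ itself can have both endpoints that close (the left endpoints of distinct gaps of length $\ge\delta$ are at mutual distance $\ge\delta$), so $g(W)=W$ and hence $g(a)=a$, $g(b)=b$. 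Thus $g$ has at least $2N(\delta)$ fixed points; choosing $\delta$ with $2N(\delta)>N$ contradicts the hypothesis that $G$ has at most $N$ fixed points. Therefore $\Lambda=\T$ and the action is minimal.

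The main obstacle is the spreading step of the second paragraph: one must first pass to the Antonov quotient to gain a genuinely contracting (proximal) action, and then track both the conjugated maps and their inverses so that the limit is controlled in $d_{\mathcal{C}^0}$ rather than merely in $d_\infty$; the bookkeeping between $G$, its action on $\T/\langle\gamma\rangle$, and the deck group $\langle\gamma\rangle$ is where most of the care is needed. By contrast, once non-discreteness is available the gap-counting argument for minimality is elementary, requiring only $d_\infty$-closeness of the elements produced.
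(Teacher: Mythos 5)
Your proof is correct and follows essentially the same route as the paper: reduce to a proximal action via Antonov's theorem, use proximality to conjugate the locally near-identity sequence into a sequence that is near the identity on all but a small arc (the paper does this with iterates $f^m$ of an expanding element rather than a single proximality map $h$, but this is the same mechanism), and then derive minimality from the fact that a non-trivial element close to the identity would have to fix the endpoints of too many large gaps of an invariant Cantor set. The extra care you take with inverses, the $d_{\mathcal{C}^0}$ metric, and the deck transformations $\gamma^{j_n}$ is sound but not a different argument.
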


\begin{proof}
	Because of the principle in Remark \ref{rem.simply}, we can assume that the action of $G$ on $\T$ is proximal in restriction to the minimal invariant subset.
	Since $G$ is non-locally discrete, there exists a non-wandering open interval $I\subset\T$, and a sequence of non-trivial elements $(g_n)_{n\in \NN}\subset G$, such that $g_n|_{I}\rightarrow \idid|_{I}$. If $\overline I=\T$, we immediately get that $G$ is non-discrete. Otherwise, fix $\eta\in (0,\frac12)$, and a non-wandering open interval $J=(a,b)\subset \T$ with $|J|<\frac{\eta}2$. By proximality in restriction to the minimal invariant subset, we can find an element $f\in G$ such that $f(\T\setminus \overline I)\subset J$. By continuity of composition, we have that the sequence of conjugates $h_n:=fg_nf^{-1}$ converges to the identity in restriction to $f(I)$. In particular, we can fix $n\in \NN$ such that $|h_{n}(x) - x|<\frac{\eta}{2}$ for every $x\in f(I)$. On the other hand, if $x\in J=(a,b)$, as $a,b\in f(I)$, we have
		\[
		a-\frac\eta2 \le h_n(a)\le h_n(x)\le h_n(b)\le b+\frac\eta2,
		\]
	and since $|J|<\frac\eta2$, this gives 
	\[
	-\eta< (a-x)-\frac\eta2 \le h_n(x)-x \le (b-x)+\frac\eta2 <\eta.
	\]
	As $\T=f(I)\cup J$, we conclude that $|h_n(x)-x|<\eta$ for any $x\in \T$, as wanted.
	
	For the second part of the statement, assume the action admits an invariant Cantor set $\Lambda\subset \T$. For any given $\varepsilon>0$, every element $g\in G$ which is $\varepsilon$-close to the identity must fix every gap of $\Lambda$ (that is, any connected component of the complement of $\Lambda$)  whose size exceeds $\varepsilon$. For sufficiently small $\varepsilon$, this gives that $g$ fixes more than $N$ points, and thus $g=\idid$.
\end{proof}

\begin{lemma}\label{l.non-discrete_without_fixed_points}
	Let $G< \homeo_+(\T)$ be a non-elementary, non-discrete subgroup with at most $N$ fixed points. Then, there exists a sequence of fixed-point-free elements in $G$ converging to the identity.
\end{lemma}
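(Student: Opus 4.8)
The plan is to build \emph{positive} elements (Definition \ref{d.close-to-identity-positive}) arbitrarily $d_\infty$-close to $\idid$: once $d_\infty(g,\idid)<1/2$, an element is fixed-point-free exactly when it is positive or negative, and positivity is stable under composition (Remark \ref{rem.composition_positive}). As in the second part of Lemma \ref{l.non_loc_discrete_entao_non_discrete}, a non-discrete group with at most $N$ fixed points can admit no invariant Cantor set, so its action is minimal or has a finite orbit; in the situation where the lemma is applied the action is minimal, and I will use the resulting proximality (via Theorem \ref{t.antonov}, after passing to the quotient by the centralizing finite-order element, as in that lemma) in the globalisation step below.

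Start from non-discreteness: there are nontrivial $g_n\to\idid$, each with at most $N$ fixed points. If some $g_n$ is fixed-point-free we are done, so assume every $g_n$ has between $1$ and $N$ fixed points. Then $\fix(g_n)$ cuts $\T$ into at most $N$ arcs, so the longest one, on which $g_n$ has constant sign, has length at least $1/N$. Passing to a subsequence and replacing $g_n$ by $g_n^{-1}$ if needed, I may assume that all $g_n$ are positive on one \emph{fixed} arc $J$ of definite length (the common part of these long arcs, whose endpoints converge along the subsequence). The point of this step is that local positivity becomes available on an interval that does not shrink with $n$.

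Next I globalise. By proximality I choose $f\in G$ with a repelling fixed point inside $J$ and with $|f^m(J)|\to 1$, exactly as in Lemma \ref{l.non_loc_discrete_entao_non_discrete}, and I fix $m$ with $|f^m(J)|>1-\delta$. I then set $h:=f^m g_n f^{-m}$, choosing $n$ large \emph{after} $m$: since $g_n\to\idid$, this beats the now-fixed expansion of $f^m$ and makes $h$ as $d_\infty$-close to $\idid$ as I wish (the displacement of $h$ stays small on $f^m(J)$, and hence on the tiny complement, because $h$ is a homeomorphism), while $h$ is positive on $f^m(J)$ since $g_n$ is positive on $J$ and $f^m$ is increasing. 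Thus $h\in G$ is close to $\idid$, positive off the complementary arc $K:=\T\setminus f^m(J)$ with $|K|<\delta$, and all of its (at most $N$) fixed points are confined to the tiny arc $K$.

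It remains to remove these residual fixed points inside $K$, and this is the step I expect to be the main obstacle. Here I would argue with the least number $k\in\{0,\dots,N\}$ of fixed points realised by nontrivial elements found arbitrarily close to $\idid$, aiming to show $k=0$. If $k\ge 1$, take $h$ as above realising exactly $k$ fixed points, all inside a tiny arc $K$; since $h$ is positive on both sides of $K$, its displacement re-enters positivity at the two ends of $K$, so the negative excursions between consecutive fixed points in $K$ are shallow and the extreme fixed points are tangential. Multiplying $h$ by a suitable conjugate $uhu^{-1}$ whose positive region covers $K$ (chosen, using minimality, so that $u(\T\setminus K)\supset K$ and so that the positive peak of $h$ is carried onto the dip) adds a positive displacement on $K$; arranging that this added push exceeds the shallow dip lifts it above the diagonal, yielding a nontrivial near-identity element with fewer than $k$ fixed points and contradicting minimality. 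The quantitative comparison between the added push and the depth of the dip — equivalently, the removal of the tangencies trapped in $K$ while keeping the element $d_\infty$-close to $\idid$ — is the delicate heart of the argument, and the bound of at most $N$ fixed points is what keeps this bookkeeping finite. Once $k=0$ is established, letting $\delta\to 0$ produces the desired sequence of fixed-point-free elements converging to the identity.
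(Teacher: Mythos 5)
Your reduction to the proximal case and your first two steps are sound, and they essentially reproduce the paper's Claim \ref{claim.localize}: using proximality to produce, for any prescribed interval and any $\varepsilon>0$, an element $\varepsilon$-close to the identity whose fixed points are confined to a tiny complementary arc $K$. (Your route via the longest complementary arc of $\fix(g_n)$, of length at least $1/N$, followed by conjugation by a contracting $f^m$, is a legitimate variant of the paper's argument, which instead pushes the accumulation points of the fixed-point sets out of $J$ directly.)

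The genuine gap is exactly where you flag it: the elimination of the residual fixed points inside $K$. Your proposed mechanism --- multiplying $h$ by a conjugate $uhu^{-1}$ whose region of positivity covers $K$ --- does not close the argument for two concrete reasons. First, conjugation gives no quantitative lower bound on the displacement of $uhu^{-1}$ over $K$: since $h$ is $\varepsilon$-close to the identity, $u$ must expand its displacement to beat the dip of $h$ on $K$, and nothing guarantees this expansion happens where you need it; the ``added push'' may be smaller than the dip it is meant to fill. Second, $uhu^{-1}$ carries its own dip on $u(K)$, where $h$ is positive but possibly with arbitrarily small displacement, so the product can acquire new fixed points there; your induction on the minimal number $k$ of fixed points of near-identity elements could fail to decrease. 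The paper resolves precisely these two issues by (i) Claim \ref{claim.notsoclose}, which by taking powers guarantees a definite displacement $\|g_n-\idid\|>\varepsilon/4$ attained \emph{outside} $J_n$, hence a uniform lower bound $g_n(y)>y+\varepsilon/12$ on a whole interval $I$ of controlled size and position, separated from $J=\bigcup J_n$; and (ii) pairing $g_m$ not with another positive element but with a \emph{negative} element $f$ (negative off $I$, with $f(y)<y-\delta$ on $J$), and then checking that $f$ and $g_m$ never cross --- on $\T\setminus(I\cup J_m)$ by opposite signs, on $I$ because $g_m$ jumps over $I$ entirely while $f$ cannot leave it to the right, and on $J_m$ because $|J_m|<\delta$ while $g_m(J_m)=J_m$. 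This crossing analysis, with its explicit constants, is the content you have deferred, so the proof as written is incomplete at its decisive step.
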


\begin{proof}
	After Lemma \ref{r.non-discrete_is_minimal}, we know that the action of $G$ on $\T$ is minimal. Using again the correspondence in Remark \ref{rem.simply}, we can assume that $G$ is proximal.
	Under this additional assumption, we  can take a sequence of non-trivial elements converging to the identity whose fixed points are all contained in an arbitrarily small interval:
	
	\begin{claim}\label{claim.localize}
		For every interval $J\subset \T$ and every $\varepsilon>0$, there exists an element $g\in G$ which is $\varepsilon$-close to the identity and has no fixed points in the interval $J$.
	\end{claim}
	
	\begin{proof}[Proof of claim]
		Let $(g_n)_{n\in \NN}\subset G$ be a sequence of non-trivial elements such that $g_n\to \idid$.  
		If $(g_n)_{n\in \NN}\subset G$ contains a subsequence without fixed points, there is nothing to do. Otherwise, by taking a subsequence, we can assume that the fixed points of $g_{n}$ are converging to points $p_1,\ldots,p_M\in \T$, for some $M\le N$.  Let $J\subset\T$ be any closed interval. By proximality of $G$, there exists an element $k\in G$ which sends $p_1,\ldots,p_M$ to the complement of $J$.  Now, by choosing $g_{n}$ close enough to the identity, we can assume that $kg_{n}k^{-1}\in G$ is $\varepsilon$-close to the identity, and that this element only fixes points which are in the complement of the interval $J$.
	\end{proof}

	Throughout the rest of the proof, we use the distance $d_\infty$ on $\homeo_+(\T)$ to quantify how close elements are. Recall that this can be locally computed by using the uniform norm $\|\cdot\|$.
	We assume by contradiction that for a fixed (small) $\varepsilon>0$,  every element $\varepsilon$-close to the identity has fixed points.  After Claim \ref{claim.localize}, we can define recursively a sequence of nested intervals $( J_n)\subset \T$ with the following properties.
	\begin{enumerate}
		\item[(1)] The sequence $(J_n)$ is shrinking to a point $p\in \T$, namely $\bigcap_n J_n= \{p\}$.
		\item[(2)] For any $n\in \NN$, there exists an element $g_n\in G$ such that  $\|g_n-\idid\|\leq\frac{\varepsilon}{2}$ and $g_n$ is positive on the complement of $J_n$, with $g_n(J_n)=J_n$.
	\end{enumerate}

	\begin{claim}\label{claim.notsoclose}
		For any $n\in \NN$ such that $|J_n|<1-\frac{\varepsilon}{2}$, there exists $m\in \NN$ such that $\frac{\varepsilon}{4}<\|g^m_n-\idid\|\leq\frac{\varepsilon}{2}$.	
	\end{claim}
	
	\begin{proof}[Proof of claim]
		Indeed, if we assume that $g_n$ is $\frac{\varepsilon}{4}$-close to the identity (otherwise $m=1$ works), since $g_n$ has fixed points only in $J_n$, for a sufficiently large power $m\in\NN$, the distance of $g_n^m$ to the identity will be larger than $\frac{\varepsilon}{2}$. Therefore, there exists $m_0\in\NN$ such that $g_n^{m_0}$ is not $\frac{\varepsilon}{2}$-close to the identity, but $g_n^m$ is $\frac{\varepsilon}{2}$-close to the identity for every $0\leq m<m_0$. The point is that $g_n^{m_0-1}$ is $\frac{\varepsilon}{2}$-close to the identity, but it is not $\frac{\varepsilon}{4}$-close: indeed, there exists $x\in\T$ such that
		$$\frac{\varepsilon}{2}<|g_n^{m_0}(x)-x|<\left|g_n\left(g_n^{m_0-1}(x)\right)-g_n^{m_0-1}(x)\right|+|g_n^{m_0-1}(x)-x|<\frac{\varepsilon}{4}+|g_n^{m_0-1}(x)-x|.$$
		This proves the claim.
	\end{proof}
	
	From now on, after Claim \ref{claim.notsoclose}, we can and will assume that $\frac{\varepsilon}{4}<\|g_n-\idid\|\leq\frac{\varepsilon}{2}$. Hence, when $n$ is sufficiently large so that $|J_n|<\frac{\varepsilon}4$, we can find a point  $x_n\in \T\setminus J_n$ such that $g_n(x_n)>x_n+\frac{\varepsilon}{4}$. Consider the interval $I_n\subset \T$ defined by $I_n:=(x_n+\frac{\varepsilon}{12},x_n+\frac{\varepsilon}{6})$. After passing to a subsequence, we can assume that $x_n$ converges to a point $x\in\T$, and for $n_0\in\NN$ large enough we have that $I:=\bigcap_{n\geq n_0}I_n$ is a non-trivial interval.
	With such choices, for every $y\in I$ and $n\geq n_0$ we have $x_n+\frac{\varepsilon}6>y$ and
	\begin{equation}\label{eq.gny}
	g_n(y)>g_n(x_n)>x_n+\frac{\varepsilon}4>y+\frac{\varepsilon}{12}.
	\end{equation}
	On the other hand, for every $n\in \NN$, the interval $J_n$ is disjoint from the interval $[x_n,x_n+\frac{\varepsilon}4]$, so that by choosing $n_1\ge n_0$ such that $|x_n-x|<\frac{\varepsilon}{48}$ for any $n\ge n_1$, we have that the intervals $I$ and $J_n$ are at least $\frac{\varepsilon}{24}$-apart for any such $n$. We deduce that the union $J:=\bigcup_{n\geq n_1} J_n$ and $I$ are at least $\frac{\varepsilon}{24}$-apart. After Claim \ref{claim.localize}, we can take an element $f\in G$ which is $\frac{\varepsilon}{2}$-close to the identity, and negative on the complement of $I$. As $J$ and $I$ are separated, we can find $\delta>0$ such that $f(y)<y-\delta$ for every $y\in J$.
	
	Take $m>n_1$ sufficiently large such that $|J_m|<\delta$; we claim that the element $f^{-1}g_m\in G$ is $\varepsilon$-close to the identity and it has no fixed points in the circle. 
	Indeed, since $f$ is negative on the complement of $I$ and $g_m$ is positive on the complement of $J_m$, it is clear that $g_m$ does not cross $f$ in the complement of $J_m\cup I$. Now, we have $|I|\le |I_m|=\frac{\varepsilon}{12}$, so that from the inequality \eqref{eq.gny} we deduce that $g_m$ does not cross $f$ in $I$. Similarly, the size of the interval $J_m$ is smaller than $\delta$ and $f(y)<y-\delta$ for every $y\in J\supset J_m$, which implies that $g_m$ does not cross $f$ in $J_m$. Therefore, $g_m$ does not cross $f$ in the whole circle $\T$, which implies that the element $f^{-1}g_m$ has no fixed points in $\T$. Finally, as $f^{-1}g_m$ is the composition of two elements $\frac{\varepsilon}{2}$-close to the identity, this element is $\varepsilon$-close to the identity, as desired.
\end{proof}
\setcounter{claim}{0}

Given a subgroup $G\leq \mathrm{Homeo}_+(\mathbb{S}^1)$, we denote by $\overline G$ its closure in $\mathrm{Homeo}_+(\mathbb{S}^1)$ with respect to the $C^0$ topology, which is still a subgroup of $\mathrm{Homeo}_+(\mathbb{S}^1)$.

\begin{lemma}\label{l.locally_globally}
	Let $G< \homeo_+(\T)$ be a non-elementary, non-discrete subgroup with at most $N$ fixed points. Then, its closure $\overline G$ has at most $N$ fixed points.
\end{lemma}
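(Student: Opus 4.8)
The plan is to argue by contradiction: assuming some non-trivial $h\in\overline G$ fixes at least $N+1$ points, I will manufacture an element of $G$ with more than $N$ fixed points, contradicting the hypothesis. First I would reduce to the proximal case exactly as at the beginning of the proof of Lemma \ref{l.non_loc_discrete_entao_non_discrete}, since passing to the finite central extension $\T/\langle\gamma\rangle$ affects neither the hypotheses nor the conclusion. The essential input is Lemma \ref{l.non-discrete_without_fixed_points}: it supplies a fixed-point-free element $u\in G$ arbitrarily $C^0$-close to the identity, which (being fixed-point-free and close to $\idid$) is either positive or negative in the sense of Definition \ref{d.close-to-identity-positive}. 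The guiding idea is that composing $h$ with such a $u$ turns the non-transversal (parabolic) fixed points of $h$ into genuine, sign-changing crossings, which then persist under $C^0$-perturbation and can therefore be detected inside $G$ itself.

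Concretely, write $h=\lim_n g_n$ with $g_n\in G$, and set $P=\{x:h(x)>x\}$ and $M=\{x:h(x)<x\}$; these are disjoint open sets whose common complement is $\fix(h)$, and on each connected component (a \emph{hump}) the sign of $h-\idid$ is constant. Let $p$ and $m$ be the numbers of components of $P$ and of $M$. Since $F(x):=hu^{-1}(x)-x$ equals $(h-u)(u^{-1}(x))$, it has the same sign pattern as $h-u$ read through the reparametrization $u^{-1}$; a direct local analysis then shows that for $u$ positive and close enough to $\idid$ (namely $\|u-\idid\|$ smaller than $\sup(h-\idid)$ on each of finitely many chosen humps) the map $hu^{-1}$ crosses the identity transversally exactly twice inside each positive hump and not at all inside the negative ones, and symmetrically for $u$ negative. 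Thus $hu^{-1}$ has $2p$ (resp. $2m$) transversal crossings with $\idid$. Now if $\fix(h)$ has empty interior, then having at least $N+1$ fixed points forces its complement to have at least $N+1$ components, so $p+m\ge N+1$ and hence $2\max(p,m)\ge N+1$; choosing $u$ positive or negative accordingly, $hu^{-1}$ admits at least $N+1$ transversal crossings, located in disjoint arcs on which $hu^{-1}-\idid$ strictly changes sign. Since $g_nu^{-1}\to hu^{-1}$ and $u\in G$, for $n$ large the intermediate value theorem produces a fixed point of $g_nu^{-1}\in G$ in each of these $\ge N+1$ disjoint arcs, contradicting that $G$ has at most $N$ fixed points.

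The main obstacle is the remaining case, in which $\fix(h)$ has non-empty interior, i.e. $h$ fixes an interval $J$ pointwise while being non-trivial. Here the previous count breaks down: a fixed-point-free $u$ is globally positive or globally negative, so composing with it can never create a crossing in the interior of $J$, and this construction only recovers the $2\max(p,m)$ crossings coming from the finitely many humps, which may be far fewer than $N+1$. The route I would take is to exclude interval-fixing elements from $\overline G$ altogether, exploiting proximality: transporting a hump of $h$ by a suitable $k\in G$ into $J$ (where $h$ is the identity), one produces, by composing $h$ with conjugates $khk^{-1}\in\overline G$, an element of $\overline G$ carrying strictly more humps, and iterating this should yield more than $N$ transversal crossings which again persist into $G$. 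Ensuring that this transport genuinely creates new sign changes, rather than merely relocating the existing ones, is the delicate point and the crux of the argument.
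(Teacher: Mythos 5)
Your strategy is essentially the paper's: reduce to the proximal case, bound the number of ``humps'' of any $h\in\overline G$ by crossing $h$ with a fixed-point-free element of $G$ close to the identity (Lemma \ref{l.non-discrete_without_fixed_points}), and then use proximality to rule out elements fixing an interval pointwise. The first half of your argument is complete and correct, and in fact proves slightly more than you use: the bound $2\max(p,m)\le N$ (at most $\lfloor N/2\rfloor$ positive and at most $\lfloor N/2\rfloor$ negative humps) holds for \emph{every} non-trivial $h\in\overline G$, whether or not $\fix(h)$ has empty interior, since all the crossings you manufacture live inside the humps.

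The only gap is the one you flag yourself, and it closes exactly along the lines you guess. If $h\in\overline G$ fixes an open interval $J$ pointwise and $k\in G$ satisfies $\overline{k(\supp(h))}\subset J$ (such $k$ exists by proximality), consider $h'=h\cdot khk^{-1}\in\overline G$. On $\T\setminus\overline{k(\supp(h))}$ the conjugate $khk^{-1}$ is the identity, so $h'=h$ there; on $k(\supp(h))$ the conjugate maps that set into itself, hence into $J\subset\fix(h)$, so $h'=khk^{-1}$ there. Therefore $\supp(h')$ is the disjoint union of $\supp(h)$ and $k(\supp(h))$ --- no relocation or cancellation can occur because the two supports are separated by construction --- and $h'$ still fixes the non-empty open set $J\setminus\overline{k(\supp(h))}$ pointwise, so it stays in the class of interval-fixing elements. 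The number of components of the support thus doubles at each step, which after finitely many iterations violates the hump bound from your first half. (The paper avoids the iteration by choosing $h$ with the maximal number of support components among interval-fixing elements of $\overline G$, a maximum which exists precisely because of that bound, and reaching the contradiction in one step.) Once $\fix(h)$ is known to have empty interior for every non-trivial $h\in\overline G$, your counting argument finishes the proof; with this inserted, your proof coincides with the paper's.
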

\begin{proof}
	As before, we use the correspondence in Remark \ref{rem.simply} to work under the additional assumption that $G$ is proximal.
	We then remark that any $f\in \overline{G}$ has at most $\lfloor\frac{N}2\rfloor$ components $I$ of $\supp(f):=\T\setminus \fix(f)$ for which $f(x)>x$ for every $x\in I$. Indeed, arguing by contradiction, by Lemma~\ref{l.non-discrete_without_fixed_points}, we can take an element $g\in G$, sufficiently close to the identity and without fixed points, so that $g$ crosses weakly hyperbolically the element $f$ at least $2\left (\lfloor\frac{N}2\rfloor +1\right )\ge N+1$ times. After Remark \ref{rem:whyp_stable}, we can find an element $f_0\in G$ sufficiently close to $f$, which also crosses hyperbolically $g$ at least $N+1$ times. This contradicts the assumption that $G$ have at most $N$ fixed points. Repeating the argument for $f^{-1}$, we deduce that $\supp(f)$ has at most $2\lfloor\frac{N}2\rfloor\le N$ connected components, and so does $\fix(f)$. 
	
	Assume now that $\overline G$ contains a non-trivial element $f$ such that $\fix(f)$ contains a non-empty open interval $I$. Let us choose such an element $f\in \overline G$ such that the number $C$ of connected components of $\supp(f)$ is maximal.	As we are assuming that $G$ be proximal, we can find an element $h\in G$ such that $\overline{h(\supp(f))}\subset I$. Then the element $f'=fhfh^{-1}\in \overline{G}$ is such that $\supp(f')$ has $2C>C$ connected components, contradicting maximality.
\end{proof}

The next result we need is very general (and classical). 

\begin{lemma}\label{l.contruction_irrational_rotation_number}
	Let $f\in\homeo_+(\T)$ be a circle homeomorphism of order $q\geq2$, with rotation number $\rot(f)=\frac{p}{q}$, where $p\ge 1$ is a natural number. Then, for every $\varepsilon>0$ there exists $\delta\in (0,1/2)$ such that for every circle homeomorphism $g\in\homeo_+(\T)$ which is positive and $\delta$-close to the identity, one has
	$$d_{\mathcal{C}^0}(f,gf)<\varepsilon\quad \text{and} \quad \rot(gf)\in \left(\frac{p}{q},\frac{p}{q}+\frac{1}{q^3}\right].$$
\end{lemma}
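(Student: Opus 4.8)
The plan is to lift everything to the universal cover $\homeo_{\ZZ}(\RR)$ and to control the translation number there. Fix the lift $F$ of $f$ with translation number $p/q$; since $f$ has order $q$ we have $f^q=\idid$, and with $\gcd(p,q)=1$ this lift satisfies the \emph{pointwise} identity $F^q(x)=x+p$ for every $x\in\RR$. For a positive homeomorphism $g$ which is $\delta$-close to the identity (with $\delta<\tfrac12$), I take the lift $G$ characterised by $0<G(x)-x<\delta$ for all $x$: positivity gives $G(x)>x$, while $\delta$-closeness bounds the displacement from above. Then $GF$ is a lift of $gf$, and I will show that its translation number $\tau(GF)$ lies in $(p/q,\,p/q+1/q^3]$; since $1/q^3<1$, this pins down the position of $\rot(gf)$ in $\T$ and yields the second assertion.

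The bound $d_{\mathcal C^0}(f,gf)<\varepsilon$ is routine. For the first summand, $d_\infty(f,gf)=\sup_{x}d\big(f(x),g(f(x))\big)=\sup_{y}d(y,g(y))=d_\infty(\idid,g)<\delta$. For the second, $(gf)^{-1}=f^{-1}g^{-1}$, so $d_\infty(f^{-1},(gf)^{-1})=\sup_x d\big(f^{-1}(x),f^{-1}(g^{-1}(x))\big)$; since $d(x,g^{-1}(x))=d(g(y),y)<\delta$ with $y=g^{-1}(x)$, and $f^{-1}$ is uniformly continuous on the compact circle, taking $\delta$ smaller than the modulus of continuity of $f^{-1}$ at level $\varepsilon/2$ makes this supremum $<\varepsilon/2$. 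Choosing $\delta$ at most $\varepsilon/2$ and at most that modulus yields $d_{\mathcal C^0}(f,gf)<\varepsilon$.

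The strict lower bound $\tau(GF)>p/q$ is the heart of the argument, and it is exactly here that finite order is used. Since $G(y)>y$ for all $y$ and both $F$ and $GF$ are increasing, an immediate induction shows $(GF)^k(x)>F^k(x)$ for all $k\ge1$ and all $x$; in particular $(GF)^q(x)>F^q(x)=x+p$ for every $x$. The function $\psi(x):=(GF)^q(x)-(x+p)$ is continuous, $1$-periodic and strictly positive, hence by compactness has a minimum $c>0$, so that $(GF)^q(x)\ge x+p+c$ for all $x$. Monotonicity of the translation number then gives $\tau\big((GF)^q\big)\ge p+c$, that is $q\,\tau(GF)\ge p+c$, whence $\tau(GF)\ge p/q+c/q>p/q$. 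Note that this step requires only positivity of $g$, not closeness to the identity; had $f$ merely rational rotation number without being of finite order, the pointwise identity $F^q(x)=x+p$ would fail and the uniform gap $c>0$ would be lost.

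For the upper bound I invoke continuity of the translation (rotation) number: one has $\|GF-F\|=\sup_y|G(y)-y|<\delta$, so $GF$ converges uniformly to $F$ as $\delta\to0$, and therefore $\tau(GF)\to\tau(F)=p/q$. Hence for $\delta$ small enough (depending on $q$) we get $\tau(GF)<p/q+1/q^3$, which together with the strict lower bound places $\tau(GF)$ in $(p/q,\,p/q+1/q^3]$. Choosing $\delta$ to satisfy simultaneously this constraint and the one coming from the $d_{\mathcal C^0}$ estimate completes the proof. The main obstacle is precisely the strict lower bound: the subtlety is that a pointwise strict inequality between maps does not in general force a strict inequality of rotation numbers, and it is the finite-order hypothesis $F^q(x)=x+p$ that upgrades the comparison at a single point into a uniform global gap via compactness.
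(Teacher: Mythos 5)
Your proof is correct, and it follows the same overall strategy as the paper: lift to $\homeo_{\ZZ}(\RR)$, exploit the pointwise identity $\widetilde f^{\,q}(x)=x+p$ coming from finite order, use positivity of $g$ for the strict lower bound and closeness to the identity for the upper bound. The two halves of the rotation-number estimate are, however, implemented differently. For the upper bound, the paper does not invoke continuity of the translation number as a black box; it chooses $\delta$ so that $(\widetilde g\widetilde f)^q(x)\in\left(\widetilde f^{\,q}(x),\widetilde f^{\,q}(x)+\frac{1}{q^2}\right)$ for all $x$, iterates this to get $(\widetilde g\widetilde f)^{nq}(x)\in\left(x+np,x+np+\frac{n}{q^2}\right)$, and reads off $\rot(gf)\in\left[\frac{p}{q},\frac{p}{q}+\frac{1}{q^3}\right]$ directly from the definition of the translation number; your appeal to continuity of $\tau$ is a legitimate shortcut (the needed uniformity over all admissible $g$ is exactly what continuity at $F$ provides), though it hides the explicit dependence of $\delta$ on $q$. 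For the strictness of the lower bound, the paper argues that $\rot(gf)=\frac{p}{q}$ would force a periodic point $x$ with $(\widetilde g\widetilde f)^q(x)=x+p$, contradicting the strict inequality above; your route---extracting a uniform gap $c>0$ from the $1$-periodic positive function $(GF)^q(x)-x-p$ by compactness and then using monotonicity and homogeneity of $\tau$---is a clean alternative that, as you correctly observe, isolates the fact that only positivity (not smallness) of $g$ is needed there. Both arguments are complete.
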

\begin{proof}
	Note that the homeomorphism $f$ is conjugate in $\homeo_+(\T)$ to the rotation $R_{p/q}$ (see Herman \cite[\S II.6]{Herman})
	Since $\homeo_+(\T)$ is a topological group (so composition, and thus conjugacy, are continuous), and since the function rotation number is continuous on $\homeo_+(\T)$, and conjugacy-invariant, we can assume $f=R_{p/q}$, and fix $\varepsilon>0$ and $\delta>0$ such that
		\begin{equation}\label{eq:rot_pert}
			d_{\mathcal{C}^0}(R_{p/q},gR_{p/q})<\varepsilon\quad \text{and} \quad \rot(gR_{p/q})\in \left(\frac{p}{q}-\frac{1}{q^3},\frac{p}{q}+\frac{1}{q^3}\right]
		\end{equation}
	for every circle homeomorphism $g\in\homeo_+(\T)$ which is $\delta$-close to the identity.
	
	Let us next make the basic observation that if $h,k\in \homeo_+(\RR)$ are two homeomorphisms of the real line such that $h\le k$ (meaning that $h(x)\le k(x)$ for any $x\in \RR$), then for any integer $n\ge0$, one has $h^n\le k^n$. This can be easily checked by induction:
		\[h^n(x)=h(h^{n-1}(x))\le h(k^{n-1}(x))\le k(k^{n-1}(x))=k^n(x).\]
	For homeomorphisms $h\in \homeo_{\ZZ}(\RR)$ of the real line commuting with integer translations, the \emph{translation number} $\tau(h)$ is defined as
		\[
		\tau(h)=\lim_{n\to\infty}\frac{h^n(x)}{n},
		\]
	and this limit does not depend on the choice $x\in \RR$. Consequently, if $h,k\in \homeo_{\ZZ}(\RR)$ are such that $h\le k$, then $\tau(h)\le \tau(k)$. The rotation number $\rot(f)$ of a homeomorphism $f\in \homeo_+(\T)$ is classically defined by taking any lift $\widetilde f\in \homeo_{\ZZ}(\RR)$ and setting $\rot(f)=\tau(\widetilde f)\pmod\ZZ$.
	
	For what follows, we denote by $T_\alpha$ the translation by $a\in \RR$. Lift $f=R_{p/q}$ to the translation $T_{p/q}$.  If $g\in \homeo_+(\T)$ is positive and $\delta$-close to the identity, we can find some $\lambda\in (0,\delta)$ and a lift $\widetilde g\in \homeo_{\ZZ}(\RR)$ such that $T_{p/q}<T_{p/q+\lambda}\le \widetilde gT_{p/q}\le T_{p/q+\delta}$. Taking translation numbers, we get
	\[
	p/q+\lambda\le \tau(\widetilde gR_{p/q})\le p/q+\delta,
	\]
	so that passing to rotation numbers, and considering condition \eqref{eq:rot_pert}, we get
	\[
	\rot(gR_{p/q})\in \left[\frac{p}q+\lambda,\frac{p}{q}+\frac{1}{q^3}\right],
	\]
	as desired.	
\end{proof}

\begin{lemma}\label{l.exists_irrational}
	Let $G< \homeo_+(\T)$ be a non-elementary, non-discrete subgroup with at most $N$ fixed points. Then, $\overline{G}$ contains an element with irrational rotation number.
\end{lemma}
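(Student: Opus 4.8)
The plan is to first extract, inside $\overline{G}$, finite-order elements of arbitrarily large order, and then to perturb such an element repeatedly, using Lemma~\ref{l.contruction_irrational_rotation_number}, so as to build a Cauchy sequence whose rotation numbers increase to an irrational limit.

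For the extraction, I would start from Lemma~\ref{l.non-discrete_without_fixed_points}, which provides a sequence of fixed-point-free elements $g_n\in G$ with $g_n\to\idid$. Since the rotation number is continuous, $\rot(g_n)\to 0$, while $\rot(g_n)\neq 0$ because $g_n$ has no fixed point. If some $\rot(g_n)$ is irrational we are already done, as $g_n\in G\subseteq\overline{G}$. Otherwise write $\rot(g_n)=p_n/q_n$ in lowest terms with $q_n\ge 2$. The key observation is that $g_n^{q_n}$ fixes every point of a period-$q_n$ orbit, hence has at least $q_n$ fixed points; since $G$ has at most $N$ fixed points, either $g_n^{q_n}=\idid$ (so $g_n$ has order $q_n$) or $q_n\le N$. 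As $\rot(g_n)\to 0$ through nonzero rational values, the denominators cannot stay bounded, so $q_n\to\infty$; thus for $n$ large $g_n$ is a finite-order element of order $q_n>N$. Fix one such $f_0\in\overline G$, of order $q_0>N$ and rotation number $p_0/q_0$.

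Now for the iteration. Suppose $f_k\in\overline G$ has finite order $q_k>N$ and $\rot(f_k)=p_k/q_k$. Composing $f_k$ on the left with positive elements of $\overline G$ arbitrarily close to the identity (these exist by Lemma~\ref{l.non-discrete_without_fixed_points}, after possibly replacing a fixed-point-free near-identity element by its inverse), Lemma~\ref{l.contruction_irrational_rotation_number} together with the continuity of $\rot$ produces elements $uf_k\in\overline G$ whose rotation numbers lie in $(p_k/q_k,\,p_k/q_k+1/q_k^3]$ and converge to $p_k/q_k$ from above as $u\to\idid$. If one of these rotation numbers is irrational we are done; otherwise we obtain infinitely many distinct rationals accumulating at $p_k/q_k$, which therefore have unbounded denominators. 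I would then select $u$ so that $f_{k+1}:=uf_k$ satisfies $q_{k+1}>2q_k$, $\ \rot(f_{k+1})-p_k/q_k<2^{-k}/q_k^2$, and $d_{\mathcal C^0}(f_k,f_{k+1})<2^{-k}$. Since $q_{k+1}>N$, Lemma~\ref{l.locally_globally} (which gives that $\overline G$ has at most $N$ fixed points) forces $f_{k+1}^{\,q_{k+1}}=\idid$, so $f_{k+1}$ is again finite order and the construction continues.

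Finally, as $\sum_k 2^{-k}<\infty$, the sequence $(f_k)$ is Cauchy for $d_{\mathcal C^0}$ and converges to some $f_\infty\in\overline G$, with $\rot(f_\infty)=\lim_k p_k/q_k=:\theta$ by continuity. Here $q_k\to\infty$, and $0<\theta-p_k/q_k\le\sum_{j\ge k}2^{-j}/q_j^2\le 2/q_k^2$. If $\theta=a/b$ were rational, then $\theta-p_k/q_k\ge 1/(bq_k)$, giving $q_k\le 2b$ for all $k$ and contradicting $q_k\to\infty$; hence $\theta$ is irrational and $f_\infty\in\overline G$ is the desired element. I expect the main difficulty to be precisely this last point, namely turning the one-step gain of Lemma~\ref{l.contruction_irrational_rotation_number} into an irrational limit: the mechanism that makes it work is that the attainable rotation numbers accumulate at $p_k/q_k$, so they have unbounded denominators, which lets me force the denominators to grow geometrically and then close the argument with a Liouville-type Diophantine estimate.
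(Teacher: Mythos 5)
Your argument is correct and follows essentially the same route as the paper: starting from the fixed-point-free near-identity elements of Lemma~\ref{l.non-discrete_without_fixed_points}, you iterate Lemma~\ref{l.contruction_irrational_rotation_number} to produce a $d_{\mathcal C^0}$-Cauchy sequence of finite-order elements whose rotation numbers increase to the limit, exactly as in the paper's construction of the sequence $(h_n)$. The only (minor) divergence is at the end, where you force $q_{k+1}>2q_k$ and conclude with a direct Liouville-type estimate, whereas the paper establishes only $q_{n+1}>q_n$ and uses the nested intervals $\left(\frac{p_n}{q_n},\frac{p_n}{q_n}+\frac{1}{q_n^2}\right)$; both correctly yield irrationality of the limiting rotation number.
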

\begin{proof}
	If the subgroup $G<\homeo_+(\T)$ has an element with irrational rotation number there is nothing to prove, because $G\subset \overline{G}$. Therefore, we will suppose that $G$ has no element with irrational rotation number. We want to construct a converging sequence $\left(h_n\right)_{n\in\NN}\subset G$ whose limit $h\in\overline{G}$ has irrational rotation number.
	
	We start by choosing a sequence of elements $\left(f_n\right)_{n\in\NN}\subset G$ without fixed points and converging to the identity, whose existence is ensured by Lemma~\ref{l.non-discrete_without_fixed_points}. After changing $f_n$ for $f_n^{-1}$ when necessary and taking a subsequence, we can assume that $\left(f_n\right)_{n\in\NN}$ is a sequence of positive circle homeomorphisms whose distance to the identity decreases. Let us fix also a sequence $(\varepsilon_n)_{n\in \NN}$ of positive numbers such that $\sum \varepsilon_n<1/2$.	We choose $h_0\in G$ to be the first element $f_{m_0}$ such that $d_{\mathcal{C}^0}(f_{m_0},\idid)<\varepsilon_0$. We write $\rot(h_0)=p_0/q_0$, where $p_0, q_0\ge 1$ are natural numbers with $\gcd(p_0,q_0)=1$.
	
	Now, let us assume by induction that $h_n\in G$ is a positive circle homeomorphism such that $d_{\mathcal{C}^0}(h_n,\idid)<\sum_{k=0}^{n}\varepsilon_k$ and of rational rotation number \[\rot(h_n)=\frac{p_n}{q_n}\in \left(\frac{p_{n-1}}{q_{n-1}},\frac{p_{n-1}}{q_{n-1}}+\frac{1}{q_{n-1}^3}\right],\]
	where $p_n, q_n\ge 1$ are natural numbers with $\gcd(p_n,q_n)=1$.
	As $G$ has at most $N$ fixed points, whenever $q_n>N$ (this condition is satisfied for any sufficiently large $n\in \NN$), we must have $h_n^{q_n}=\idid$.
	By Lemma~\ref{l.contruction_irrational_rotation_number}, there exists $\delta_n>0$ such that for every  positive circle homeomorphism $g\in\homeo_+(\T)$ which is $\delta_n$-close to the identity, we have that
	$d_{\mathcal{C}^0}(h_n,gh_n)<\varepsilon_{n+1}\, \, \text{ and } \, \, \rot(gh_n)\in \left(\frac{p_n}{q_n},\frac{p_n}{q_n}+\frac{1}{q_n^3}\right]$. Then, we define $h_{n+1}$ as $f_{m_n}h_n\in G$, where $f_{m_n}$ is the first element of the sequence $\left(f_n\right)_{n\in\NN}$ such that $d_{\mathcal C^0}(f_{m_n},\idid)<\delta_n$. This gives that
	$$d_{\mathcal{C}^0}(h_{n+1},\idid)\leq d_{\mathcal{C}^0}(h_{n+1},h_n)+ d_{\mathcal{C}^0}(h_{n},\idid)<d_{\mathcal{C}^0}(f_{m_n}h_{n},h_n)+\sum_{k=0}^{n}\varepsilon_k<\sum_{k=0}^{n+1}\varepsilon_k.$$
	Since $\sum_{k=0}^{n+1}\varepsilon_k<\frac{1}{2}$, $h_{n+1}\in G$ is a positive circle homeomorphism (Remark \ref{rem.composition_positive}). By our assumption on $G$, the rotation number of $h_{n+1}$ is rational, and we write $\rot(h_{n+1})=\frac{p_{n+1}}{q_{n+1}}$, which belongs to $\left(\frac{p_n}{q_n},\frac{p_n}{q_n}+\frac{1}{q_n^3}\right]$ by construction. Hence, the inductive assumptions are satisfied.
	
	We next check that $(h_n)$ is a Cauchy sequence with respect to the $d_{\mathcal C^0}$-distance (for which $\homeo_+(\T)$ is complete), so that it admits a limit $h\in \overline{G}$. Indeed, for every $n,m\in\NN$ we have
	$$d_{\mathcal{C}^0}(h_{n+m},h_n)\leq\sum_{k=n}^{n+m}d_{\mathcal{C}^0}(h_{k+1},h_k)<\sum_{k=n}^{n+m}\varepsilon_k<\sum_{k=n}^{+\infty}\varepsilon_k\xrightarrow{\,\,\,n\rightarrow+\infty\,\,\,} 0.$$
	It remains to prove that $\rot(h)\notin \mathbb{Q}$. To see this, for every $n\in\NN$, we consider the interval $I_n=\left(\frac{p_{n}}{q_{n}},\frac{p_{n}}{q_{n}}+\frac{1}{q_{n}^2}\right)$. By the classical Dirichlet's approximation theorem, we have that $I_n$ is a nested sequence of intervals, such that $\bigcap I_n$ is an irrational number. Let us detail this for completeness. First, we recall that $$\frac{p_{n+1}}{q_{n+1}}\in\left(\frac{p_n}{q_n},\frac{p_n}{q_n}+\frac{1}{q_n^3}\right],$$
	so that $\frac{p_{n+1}}{q_{n+1}}> \frac{p_n}{q_n}$. Next, we want to prove that $\frac{p_{n+1}}{q_{n+1}}+\frac{1}{q_{n+1}^2}< \frac{p_{n}}{q_{n}}+\frac{1}{q_n^2}$. We will first show that $q_{n+1}>q_n$. Indeed, if $q_{n+1}\leq q_n$ then
	$$p_{n+1}\in\left(\frac{p_nq_{n+1}}{q_n},\frac{p_nq_{n+1}}{q_n}+\frac{q_{n+1}}{q_n^3}\right]\subset \left(\frac{p_nq_{n+1}}{q_n},\frac{p_nq_{n+1}}{q_n}+\frac{1}{q_n^2}\right],$$ 
	which is an absurd, because $\left(\frac{p_nq_{n+1}}{q_n},\frac{p_nq_{n+1}}{q_n}+\frac{1}{q_n^2}\right]$ does not contain any integer. Now, we have the following inequality 
	$$\frac{p_{n+1}}{q_{n+1}}+\frac{1}{q_{n+1}^2}\leq\frac{p_n}{q_n}+\frac{1}{q_n^3}+\frac{1}{q_{n+1}^2}\leq \frac{p_n}{q_n}+\frac{1}{q_n^3}+\frac{1}{(q_{n}+1)^2}=\frac{p_n}{q_n}+\frac{q_n^3+(q_n+1)^2}{q_n^3(q_n+1)^2}$$
	$$=\frac{p_n}{q_n}+\frac{1}{q_n^2}\frac{q_n^3+q_n^2+2q_n+1}{q_n^3+2q_n^2+q_n}<\frac{p_n}{q_n}+\frac{1}{q_n^2}\frac{q_n^3+q_n^2+2q_n+1+(q_n^2-q_n-1)}{q_n^3+2q_n^2+q_n}=\frac{p_n}{q_n}+\frac{1}{q_n^2}.$$
	So, we conclude that $I_{n+1}\subset I_n$ for every $n\in\NN$. On the other hand, $\left(q_n\right)_{n\in\NN}$ is an increasing sequence of integers, and therefore $|I_n|=\frac{1}{q_n^2}\longrightarrow 0$. Hence, $\bigcap_{n\in\NN}I_n$ converges to a point $\alpha\in\RR$. We claim that $\alpha$ is not a rational number, otherwise $\alpha=\frac{p}{q}$ with $p,q\in\ZZ$, and $q\geq 2$, which implies that $\frac{p}{q}\in I_n=\left(\frac{p_n}{q_n},\frac{p_n}{q_n}+\frac{1}{q^2_n}\right)$, for every $n\in\NN$. Therefore, for $n$ sufficiently large, we have $2q<q_n$ and then 
	$$p\in \left(\frac{p_nq}{q_n},\frac{p_nq}{q_n}+\frac{q}{q^2_n}\right)\subset \left(\frac{p_nq}{q_n},\frac{p_nq}{q_n}+\frac{1}{2q_n}\right),$$
	which is an absurd, because $\left(\frac{p_nq}{q_n},\frac{p_nq}{q_n}+\frac{1}{2q_n}\right)$ does not contain any integer.
	
	Let us go back to our converging sequence $(h_n)$. By construction we have
	\[
	\rot(h_{n+1})\in \left(\frac{p_n}{q_n},\frac{p_n}{q_n}+\frac{1}{q_n^3}\right]\subset \left(\frac{p_n}{q_n},\frac{p_n}{q_n}+\frac{1}{q_n^2}\right)=I_n,
	\]
	so that $\rot(h_{n+k})\in I_{n+k}\subset I_n$ for any $k\ge 1$. Hence, by  continuity of $\rot:\homeo_+(\T)\rightarrow\T$, it follows that $\rot(h)\in I_n$, for every $n\in\NN$, and therefore $\rot(h)=\alpha$ is irrational, as desired.
\end{proof}

\begin{lemma}\label{l.contains_irrational}
	Let $G< \homeo_+(\T)$ be a non-elementary, non-discrete subgroup with at most $N$ fixed points. Then, $\overline{G}$ contains an element which is conjugate to an irrational rotation. Therefore, up to conjugacy, $\overline{G}$ contains $\SO(2)$.
\end{lemma}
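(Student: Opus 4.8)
The plan is to promote the element with irrational rotation number furnished by Lemma~\ref{l.exists_irrational} to one that is genuinely \emph{minimal}. Recall the tools already available for $\overline G$: it has at most $N$ fixed points and, by (the proof of) Lemma~\ref{l.locally_globally}, no non-trivial element of $\overline G$ fixes a non-empty open interval; moreover $\overline G$ acts minimally (Lemma~\ref{r.non-discrete_is_minimal}) and contains a sequence of fixed-point-free elements converging to the identity (Lemma~\ref{l.non-discrete_without_fixed_points}); as in the previous proofs we may assume the action of $\overline G$ to be proximal. Fix $h\in\overline G$ with $\rot(h)=\alpha\notin\QQ$. If I can show that $h$ is minimal, I am done: by Poincaré's classification a minimal homeomorphism with irrational rotation number is topologically conjugate to the rotation $R_\alpha$, so $\overline{\langle h\rangle}$ is conjugate to $\overline{\langle R_\alpha\rangle}=\SO(2)$, which proves at once both assertions of the lemma. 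Thus everything reduces to excluding that $h$ admits a wandering interval.

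Suppose then, towards a contradiction, that $h$ is not minimal. By Poincaré's theory $h$ has a unique minimal set $K$, which is a Cantor set, and the complement $\T\setminus K$ is a countable union of wandering intervals (the \emph{gaps} of $h$); there is a degree-one monotone continuous map $\phi\colon\T\to\T$ collapsing the closure of each gap to a point and satisfying $\phi\circ h=R_\alpha\circ\phi$. Two structural remarks organize the argument. First, any $g\in\overline G$ preserving $K$ permutes the gaps and descends through $\phi$ to a homeomorphism $\bar g$ with $\phi\circ g=\bar g\circ\phi$; the assignment $g\mapsto\bar g$ is a homomorphism on $\stab(\overline G,K)$ whose kernel consists of elements fixing $K$ pointwise, hence with infinitely many fixed points, hence trivial by the bound on fixed points. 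In particular no non-trivial element of $\overline G$ can fix $K$ pointwise. Second, in the Denjoy case the cyclic group $\overline{\langle h\rangle}$ is itself \emph{discrete}: if $n_k\alpha\to 0$, then reading positions through $\phi$ one checks that the two endpoints of a gap are exchanged in the limit, so that $h^{n_k}$ does not converge to the identity on $K$. Consequently the cyclic closure contributes nothing new, and the non-discreteness of the ambient group $\overline G$ must be used in an essential way.

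The heart of the proof, and the step I expect to be the main obstacle, is to convert a wandering interval of $h$ into a contradiction with the rigidity of $\overline G$. The route I would pursue is as follows. Using proximality, replace $h$ by a conjugate (still in $\overline G$, still non-minimal with the same rotation number) for which $K$ is squeezed into an arbitrarily short arc $T$, so that $h$ has a single dominant gap $J^\ast=\T\setminus\overline{T}$ of length close to $1$, while every other gap lies inside $T$; on $J^\ast$ the map $h$ is then a strong contraction sending $J^\ast$ into $T$, and $h^{-1}$ a corresponding expansion. Feeding in the fixed-point-free elements close to the identity provided by Lemma~\ref{l.non-discrete_without_fixed_points}, conjugating them by high powers of $h$ to localize their action along $J^\ast$, and using completeness of $(\homeo_+(\T),d_{\mathcal C^0})$ to extract a limit, one aims to manufacture a non-trivial element of $\overline G$ whose fixed set contains a non-empty open subinterval of $J^\ast$ (equivalently, an infinite set) — contradicting the rigidity recorded above. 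This would force $K=\T$, i.e.\ $h$ minimal, and the reduction of the first paragraph then completes the proof. The delicate point — where the non-equicontinuity of the Denjoy dynamics fights back — is precisely to control the limiting element so that its fixed set is genuinely a whole interval (or at least infinite) rather than being destroyed by the oscillation of gap endpoints; this is the part that I expect to demand the most care.
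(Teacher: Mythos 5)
Your reduction in the first paragraph is exactly the right one (and the paper's): it suffices to show that an element $h\in\overline G$ with $\rot(h)=\alpha\notin\QQ$ has no wandering interval, since Poincar\'e's theorem then conjugates $h$ to $R_\alpha$ and hence $\overline{\langle h\rangle}$ to $\SO(2)$. The problem is that the actual contradiction in the Denjoy case is never carried out. Your third paragraph describes a route (squeeze the minimal set $K$ of $h$ into a short arc by proximality, conjugate near-identity fixed-point-free elements by high powers of $h$, extract a limit, and hope the limit fixes an open interval) and then explicitly concedes that the decisive control on that limit is missing. That concession is the whole proof: as written there is no reason the limits you extract should \emph{fix} an open interval rather than merely be uniformly close to the identity on it, and manufacturing an element of $\overline G$ with an interval of fixed points is in any case a stronger (and harder) contradiction than is needed, since $\overline G$ is already known to have at most $N$ fixed points by Lemma~\ref{l.locally_globally}.

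The irony is that you already hold the correct mechanism in your second paragraph and then abandon it. You observe that along close-return times (the denominators $q_n$ of the convergents of $\alpha$) the power $h^{q_n}$ sends one endpoint of a gap essentially onto the other, which is why $\overline{\langle h\rangle}$ is discrete in the Denjoy case. The paper's proof runs precisely on this: normalize so that every gap has length at most $1/4$, pass to a subsequence along which $h^{q_{n_j}}$ is positive, and note that $h^{q_{n_j}}\to\idid$ at points of $K$ that are not gap endpoints, while positivity and the disjointness of the gap orbit force $h^{q_{n_j}}$ to displace the left endpoint of each gap past its right endpoint. Hence, taking $\varepsilon$ smaller than the lengths of the $N$ largest gaps, near each such gap the graph of $h^{q_{n_j}}$ leaves and re-enters the band $\{(x,y)\,:\,x<y<x+\varepsilon\}$; a positive $g\in G$ with $d_\infty(g,\idid)<\varepsilon$ (Lemma~\ref{l.non-discrete_without_fixed_points}) therefore crosses $h^{q_{n_j}}$ at least twice per large gap, so $g^{-1}h^{q_{n_j}}\in\overline G$ has at least $2N>N$ fixed points --- the desired contradiction. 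You should replace your third paragraph by this counting argument; as it stands, your proof has a genuine gap at its central step.
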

\begin{proof}
	We will prove that any $f\in\overline{G}$ with irrational rotation number $\alpha=\rot(f)$ (by Lemma \ref{l.exists_irrational}, we can find such an element in $G$) is conjugate to the rotation $R_\alpha$. We argue by way of contradiction. Let us denote by $\Lambda$ the minimal $f$-invariant Cantor set.	Recall that a gap of $\Lambda$ is a connected component of the complement $\T\setminus \Lambda$. Up to conjugating $G$ by some circle homeomorphism, we can assume that the size of any gap does not exceed $\frac14$.
	After the discussion in Section \ref{s.preliminaries}, $f$ is semi-conjugate to the rotation $R_\alpha$. Denoting by $(q_n)$ the sequence of denominators of rational approximations of $\alpha$, we have $R_\alpha^{q_n}\to \idid$, which implies that for any $x\in\Lambda$ which is not in the closure of any wandering interval, we have $f^{q_n}(x)\to x$. (This is because any such $x$ satisfies that $\{x\}=h^{-1}(h(x))$, where $h:\T\to \T$ is a continuous monotone map such that $hf=R_\alpha h$ giving the semi-conjugacy.) After the choice of the size of the gaps, we can take $n_0\in \NN$ sufficiently large so that $d_\infty(f^{q_n},\idid)<1/2$ for any $n\ge n_0$, and then take a subsequence $(q_{n_j})_{j\in \NN}\subset (q_n)_{n\ge n_0}$ such that $f^{q_{n_j}}$ is positive for any $j\in \NN$.
	
	Take $\varepsilon>0$ smaller than the size of the $N$ largest gaps. By Lemma \ref{l.non-discrete_without_fixed_points}, we can choose a positive element $g\in G$ which is $\varepsilon$-close to the identity. Note that for any $j\in\NN$ sufficiently large, $g$ crosses $f^{q_{n_j}}$ twice on a small neighborhood of any of the $N$ largest gaps, hence at least $2N$ times. This contradicts the assumption that $g^{-1}f^{n_j}\in\overline{G}$ have at most $N$ fixed points. So we conclude that $f\in\overline{G}$ with $\rot(f)\notin\QQ$ is conjugate to an irrational rotation and therefore $\overline{G}$ contains a conjugate copy of $\SO(2)$.
\end{proof}

We finally use a result by Giblin and Markovic \cite[Theorem 1.2]{GiblinMarkovic}. (A self-contained argument for the conclusion in the case $N=2$ is given in the second author's PhD thesis \cite{theseJoao}.)

\begin{theorem}[Giblin--Markovic]\label{t.giblin-markovic}
	Let $G\le \homeo_+(\T)$ be a closed transitive subgroup which contains a non-constant continuous path. Then we have the following alternative:
	\begin{enumerate}
		\item either $G$ is conjugate to $\SO(2)$, or
		\item $G$ is conjugate to $\psl^{(k)}(2,\RR)$, for some $k\ge 1$, or
		\item $G$ is conjugate to $\homeo_+^{(k)}(\T)$, for some $k\ge 1$, where $\homeo_+^{(k)}(\T)$ is the group of all homeomorphisms commuting with the group of order $k$ rotations.
	\end{enumerate}
\end{theorem}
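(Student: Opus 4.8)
The statement is the classification theorem of Giblin and Markovic, so the plan is to follow its strategy, organizing the argument by the amount of commutation present in $G$. Since a transitive action is minimal, the first dichotomy is whether $G$ is elementary. If it is, it preserves a probability measure $\mu$ which, by minimality, is atomless with full support; the map $x\mapsto\mu[x_0,x)$ then conjugates $G$ into $\SO(2)$. Its image is a closed transitive, hence infinite, subgroup of $\SO(2)$, so it is all of $\SO(2)$: this is alternative (1), and the path hypothesis plays no role here.

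So assume $G$ is non-elementary and apply Antonov's theorem (Theorem~\ref{t.antonov}) to obtain the maximal finite cyclic central subgroup $\langle\gamma\rangle\cong\ZZ_k$ with $G/\langle\gamma\rangle$ proximal. As $\gamma$ commutes with the transitive group $G$, any fixed point of $\gamma$ would be global, so $\gamma$ is fixed-point-free and therefore conjugate to the rotation $R_{1/k}$; after this conjugacy $\gamma=R_{1/k}$ and $G\le\homeo_+^{(k)}(\T)$. Passing to $\bar\T=\T/\langle\gamma\rangle\cong\T$, the image $\bar G\le\homeo_+(\bar\T)$ is closed (the quotient by the finite group $\langle\gamma\rangle$ is proper), transitive, proximal, and carries the projected path, which remains non-constant since the kernel is finite. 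Conversely, since only powers of $\gamma$ can permute the sheets of the covering $\T\to\bar\T$, transitivity of $G$ forces $\gamma\in G$, so $G$ is the full $\langle\gamma\rangle$-preimage of $\bar G$. Hence, once $\bar G$ is shown to be conjugate to $\psl(2,\RR)$ or $\homeo_+(\T)$, taking preimages yields $\psl^{(k)}(2,\RR)$ or $\homeo_+^{(k)}(\T)$, i.e.\ alternatives (2) and (3).

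Everything thus reduces to the core claim: a closed, proximal subgroup $\bar G\le\homeo_+(\T)$ containing a non-constant path is conjugate to $\psl(2,\RR)$ or to $\homeo_+(\T)$. I would first reparametrize the path to start at $\idid$ and cut at the last trivial time, producing non-trivial elements of $\bar G$ arbitrarily close to $\idid$ (so $\bar G$ is non-discrete). The crucial---and, I expect, hardest---step is to upgrade this path to a genuine one-parameter subgroup $\{f_t\}\subset\bar G$. For this I would exploit proximality: choose a contraction $h\in\bar G$ with $h(I)\Subset I$ about an attracting fixed point, and renormalize the path by conjugating with the powers $h^n$; this blow-up at the fixed point rescales both space and time, and the limits so obtained assemble, compatibly with the group law, into a non-trivial flow. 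A flow on $\T$ with a fixed point fixes it at all times, yielding a well-defined germ flow at such a point $p$.

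The final ingredient is a rigidity dichotomy decided by the point stabilizer. Using proximality to manufacture one-parameter families transverse to $\{f_t\}$ and conjugating the resulting north--south dynamics to the standard model, one first locates a conjugate copy of $\psl(2,\RR)$ inside $\bar G$. Then either the germs at $p$ realized by $\stab(\bar G,p)$ are exactly the one-dimensional affine germs, in which case closedness and transitivity pin $\bar G$ down to $\psl(2,\RR)$; or $\stab(\bar G,p)$ contains a germ flatter than affine, which one spreads around the circle by transitivity and fills in by closedness to recover all of $\homeo_+(\T)$. The two delicate points---the renormalization producing the flow, and the proof that the affine-germ alternative admits nothing beyond $\psl(2,\RR)$---are precisely where the substance of the Giblin--Markovic argument lies.
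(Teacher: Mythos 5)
The paper itself gives no proof of this statement: it is imported as an external input, cited to \cite[Theorem 1.2]{GiblinMarkovic} (with only the case $N=2$ reproved in the second author's thesis). So your proposal must be judged as a stand-alone proof of the classification, and as such it has a genuine gap. Your outer reductions are sound: the elementary case does yield alternative (1) via the invariant measure; Antonov's theorem (Theorem \ref{t.antonov}) together with the fact that $\fix(\gamma)$ is a closed $G$-invariant set does reduce the non-elementary case to a closed, transitive, proximal subgroup $\bar G\le\homeo_+(\bar\T)$ of the quotient circle carrying a non-constant path. But the core claim --- that such a $\bar G$ is conjugate to $\psl(2,\RR)$ or to $\homeo_+(\bar\T)$ --- is exactly what you do not prove. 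The renormalization step (conjugating the path by powers of a proximal contraction and assembling limits into a one-parameter flow) is stated as an intention only: there is no argument that the rescaled limits exist, are non-trivial, and satisfy the flow law, and the simultaneous rescaling of ``space and time'' is precisely where such arguments are delicate. Likewise the final dichotomy (stabilizer germs affine $\Rightarrow\psl(2,\RR)$; a flatter germ $\Rightarrow$ all of $\homeo_+(\bar\T)$) is asserted without a proof of either horn; in particular, excluding closed transitive groups strictly between $\psl(2,\RR)$ and $\homeo_+(\T)$ is the hardest part of Giblin--Markovic's work. You acknowledge both gaps yourself, but these two steps \emph{are} the theorem; what you have is a plausible strategy outline, not a proof.

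A smaller but real flaw is the claim that ``transitivity of $G$ forces $\gamma\in G$,'' so that $G$ is the full $\langle\gamma\rangle$-preimage of $\bar G$. Transitivity only provides, for each $x$, some $g\in G$ with $g(x)=\gamma(x)$; such a $g$ permutes the fiber through $x$ but need not equal $\gamma$, so this step is unjustified as stated. The conclusion is true, but it should be derived \emph{a posteriori}: once $\bar G$ is identified with $\psl(2,\RR)$ or $\homeo_+(\bar\T)$, let $P$ denote the full preimage, namely $\psl^{(k)}(2,\RR)$ or $\homeo_+^{(k)}(\T)$. Then $G\cdot\langle\gamma\rangle=P$ with $\langle\gamma\rangle$ central, and $P$ is perfect (a connected semisimple Lie group in case (2); in case (3) this follows from perfectness of $\homeo_{\ZZ}(\RR)$, since $\homeo_+^{(k)}(\T)$ is a quotient of it), whence
\[
P=[P,P]=[G\cdot\langle\gamma\rangle,\,G\cdot\langle\gamma\rangle]=[G,G]\le G,
\]
so $G=P$. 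Without some argument of this kind, your reduction only shows that $G$ is a subgroup of $P$ surjecting onto $P/\langle\gamma\rangle$, which does not by itself give alternatives (2) and (3).
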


We can now put everything together and prove the main result of our work.

\begin{proof}[Proof of Theorem \ref{mainthmA.non-discrete}]
	Let $G$ be a non-elementary subgroup with at most $N$ fixed points. If $G$ is non-locally discrete, then after Lemma \ref{l.non_loc_discrete_entao_non_discrete}, $G$ is non-discrete, so that by Lemma \ref{l.contains_irrational} its closure $\overline G$ contains a conjugate copy of the subgroup of rotations $\SO(2)$. In particular, $\overline G$ is closed, transitive and contains a non-constant continuous path. As $G$ is non-elementary, the first possibility in Theorem \ref{t.giblin-markovic} cannot occur. On the other hand, after Lemma \ref{l.locally_globally}, $\overline G$ has at most $N$ fixed points, so that the third possibility in Theorem \ref{t.giblin-markovic} cannot occur either. We conclude that $\overline G$ is conjugate to $\psl^{(k)}(2,\RR)$, for some $k\ge 1$, as desired.
\end{proof}

{\small \subsection*{Acknowledgments}
	This work is based on part of the second author's PhD thesis, and we thank the readers Andrés Navas and Maxime Wolff, as well as the anonymous referee, for their careful reading and suggestions. We are particularly grateful to Maxime Wolff for pointing out that the result of Giblin and Markovic could shortcut our proof.
	The authors acknowledge the support of the project MATH AMSUD, DGT -- Dynamical Group Theory (22-MATH-03) and the project ANR
	Gromeov (ANR-19-CE40-0007), and the host department IMB
	receives support from the EIPHI Graduate School (ANR-17-EURE-0002). M.T. has been partially supported by the project ANER
	Agroupes (AAP 2019 Région Bourgogne--Franche--Comté).}

\bibliographystyle{alpha}
\bibliography{biblio}

\medskip

\noindent{Institut de Mathématiques de Bourgogne (IMB, UMR CNRS 5584)\\
	Université de Bourgogne \\
	9 av.\ Alain Savary, 21000 Dijon, France\\}
\noindent \texttt{christian.bonatti@u-bourgogne.fr}\\
\noindent \texttt{joao.carnevale@hotmail.com}\\
\noindent \texttt{michele.triestino@u-bourgogne.fr}\\

 \end{document}